\newtheorem{thm}{Theorem}[section]
\newtheorem{cor}[thm]{Corollary}
\newtheorem{lem}[thm]{Lemma}
\newtheorem{pro}[thm]{Proposition}
\numberwithin{equation}{section}
\journal{}
\newcommand{\N}{\mathcal{N}}
\begin{document}
\begin{spacing}{1.15}
\begin{frontmatter}
\title{\textbf{
Spectra of power hypergraphs and signed graphs via parity-closed walks}}

\author[label1,label2]{Lixiang Chen}\ead{chenlixiang@hrbeu.edu.cn}
\author[label2]{Edwin R. van Dam}\ead{Edwin.vanDam@tilburguniversity.edu}
\author[label1]{Changjiang Bu}\ead{buchangjiang@hrbeu.edu.cn}

\address[label1]{College of Mathematical Sciences, Harbin Engineering University, Harbin, PR China}
\address[label2]{Department of Econometrics and O.R., Tilburg University, Tilburg, Netherlands}

\begin{abstract}
The $k$-power hypergraph $G^{(k)}$ is the $k$-uniform hypergraph that is obtained by adding $k-2$ new vertices to each edge of a graph $G$, for $k \geq 3$.
A parity-closed walk in $G$ is a closed walk that uses each edge an even number of times. In an earlier paper, we determined the eigenvalues of the adjacency tensor of $G^{(k)}$ using the eigenvalues of signed subgraphs of $G$. Here, we express the entire spectrum (that is, we determine all multiplicities and the characteristic polynomial) of $G^{(k)}$ in terms of parity-closed walks of $G$. Moreover, we give an explicit expression for the multiplicity of the spectral radius of $G^{(k)}$. Our results are mainly obtained by exploiting the so-called trace formula to determine the spectral moments of $G^{(k)}$. As a side result, we show that the number of parity-closed walks of given length is the corresponding spectral moment averaged over all signed graphs with underlying graph $G$. We also extrapolate the characteristic polynomial of $G^{(k)}$ to $k=2$, thereby introducing a pseudo-characteristic function. Among other results, we show that this function is the geometric mean of the characteristic polynomials of all signed graphs on $G$ and characterize when it is a polynomial.
This supplements a result by Godsil and Gutman that the arithmetic mean of the characteristic polynomials of all signed graphs on $G$ equals the matching polynomial of $G$.
\end{abstract}

\begin{keyword}power hypergraphs, adjacency tensor, signed graphs, spectral moments, characteristic polynomial, spectral radius, closed walks \\
\emph{AMS classification(2020):}05C65, 05C50.
\end{keyword}

\end{frontmatter}

\section{Introduction}

For a given graph $G=(V,E)$ and a sign function $\pi:E \rightarrow \{\pm1\}$, we denote by $\phi_{\pi}(\lambda)$ the characteristic polynomial of the signed adjacency matrix of a signed graph $G_\pi$ on $G$.
Let $\Pi$ denote the set of all sign functions on $E$.
Godsil and Gutman \cite{Godsil1978onthematching} proved the remarkable result that the arithmetic mean of the characteristic polynomials of all signed graphs with underlying graph $G$ equals the matching polynomial $\alpha(\lambda)$ of $G$, i.e.,
\begin{align*}
\alpha(\lambda)=2^{-|E|}\sum_{\pi \in \Pi}{\phi_{\pi}(\lambda)}.
\end{align*}
The result of Godsil and Gutman was used by Marcus, Spielman, and Srivastava \cite{Marcus2015Interlacing} to show the existence of an infinite family of $d$-regular bipartite Ramanujan graphs, for any $d\geq 3$.

The $k$-power hypergraph $G^{(k)}$ is the $k$-uniform hypergraph that is obtained by adding $k-2$ new vertices to each edge of a graph $G$, for $k \geq 3$ \cite{Hu2013Cored}.
A parity-closed walk in $G$ is a closed walk that uses each edge an even number of times.
In an earlier paper \cite{Chen2022All}, we determined all eigenvalues of the adjacency tensor of $G^{(k)}$ using the eigenvalues of signed subgraphs of $G$.

Here, we will extend combinatorial methods that are common for ordinary graphs to power hypergraphs in the sense that we will find a relation between the spectral moments of $G^{(k)}$ and counts of parity-closed walks in $G$. In order to determine these spectral moments, we will mainly exploit the so-called trace formula for tensors by Shao, Qi, and Hu \cite{shao2015some}.

In particular, we will determine the entire spectrum (and the characteristic polynomial) of $G^{(k)}$ by determining expressions for the multiplicities of the eigenvalues in terms of parity-closed walks of $G$. Moreover, we give an explicit expression for the multiplicity of the spectral radius. Note that from the Perron-Frobenius Theorem (for matrices), it is known that the spectral radius of a connected graph is an eigenvalue of the graph with multiplicity $1$. Part of the Perron-Frobenius Theorem has been generalized to tensors, in particular, it is known that the spectral radius of a uniform hypergraph is an eigenvalue \cite{chang2008perron}. However, it is unknown what its multiplicity is.

As a side result (that has nothing to do with hypergraphs), we show that the number of parity-closed walks of given length in a graph $G$ is the corresponding spectral moment averaged over all signed graphs with underlying graph $G$. We also extrapolate the characteristic polynomial of $G^{(k)}$ to $k=2$, thereby introducing a pseudo-characteristic function. Among other results, we show that this function is the geometric mean of the characteristic polynomials of all signed graphs on $G$ and characterize when it is a polynomial.
This supplements the result by Godsil and Gutman \cite{Godsil1978onthematching} that the arithmetic mean of the characteristic polynomials of all signed graphs on $G$ equals the matching polynomial of $G$.

\subsection{Organization of the paper}

The rest of this paper is organized as follows:
In Section \ref{jieshaozhangjie}, we introduce some notation and give some lemmas about the trace of tensors (Section \ref{sec:2.1}), the spectral moments of hypergraphs (Section \ref{sec2.2}), and the spectral radius of signed graphs (Section \ref{sec:2.3}).

In Section \ref{sec:pcwalkssigned}, we show that the number of parity-closed walks of given length in a graph $G$ is the arithmetic mean of spectral moments of all signed graphs on $G$. This is not a result about hypergraphs, nor eigenvalues, but it will be used later in the paper.

Section \ref{michaotupuju} is the main part that relates concepts in the power hypergraph $G^{(k)}$ to concepts in $G$. The section is divided into a part on Eulerian walks in related digraphs (Section \ref{sec:eulerian}), a part on the number of spanning trees (Section \ref{sec:spanningtrees}), and a final part (Section \ref{sec:coveringclosedwalks}), where we ultimately express the spectral moments of the power hypergraph in terms of counts of parity-closed walks in $G$ in Proposition \ref{zydl}.

In Section \ref{sec:charpoly}, eigenvalues and their multiplicities come into play. In Theorem \ref{pro5.1}, we determine expressions for the multiplicities and the characteristic polynomial of a power hypergraph $G^{(k)}$, all in terms of $k$, counts of parity-closed walks in $G$, and eigenvalues of signed subgraphs of $G$.

In Section \ref{sec4}, we extend the expressions for the characteristic polynomial of $G^{(k)}$ to the (hypothetical) case $k=2$ and study the obtained pseudo-characteristic function. In particular, in Theorem \ref{jieshao1} we show that it is the geometric mean of the characteristic polynomials of all signed graphs on $G$.

In Section \ref{sec5}, we finish the paper by determining the multiplicity of the spectral radius of a power hypergraph.

\section{Preliminaries}\label{jieshaozhangjie}

In this section, we introduce some basic notation and give auxiliary lemmas on the trace of tensors, the spectral moments of hypergraphs, and the spectral radius of signed graphs.

\subsection{The trace of tensors}\label{sec:2.1}
For a positive integer $n$, let $\left[ n \right] = \left\{ {1, \ldots ,n} \right\}$.
A $k$-order $n$-dimensional complex tensor $A= \left( {{t_{{i_1} \cdots {i_k}}}} \right) $ is a multidimensional array with $n^k$ entries in $\mathbb{C}$, where ${i_j} \in \left[ n \right]$, $j = 1, \ldots ,k$.
For $x = {\left( {{x_1}, \ldots ,{x_n}} \right)^{\top}} \in {\mathbb{C}^n}$, we let  ${x^{\left[ {k - 1} \right]}} = {\left( {x_1^{k - 1}, \ldots ,x_n^{k - 1}} \right)^{\top}}$. The $i$-th component of the vector $A{x^{k - 1}} \in \mathbb{C}^{n}$ is defined as
\[{\left( {A{x^{k - 1}}} \right)_i} = \sum\limits_{{i_2}, \ldots ,{i_k}=1}^n {{a_{i{i_2} \cdots {i_k}}}{x_{{i_2}}} \cdots {x_{{i_k}}}} .\]
If there exists a nonzero vector $x =(x_1,x_2,\ldots,x_n)^{\top}\in {\mathbb{C}^n}$ such that $A{x^{k - 1}} = \lambda {x^{\left[ {k - 1} \right]}}$, then $\lambda$ is called an \emph{eigenvalue} of $A$ and $x$ is an \emph{eigenvector} of $A$ corresponding to $\lambda$ \cite{lim2005singular,qi2005eigenvalues}.
The \emph{characteristic polynomial} of $A$ is defined as the resultant of the polynomial system $(\lambda x^{[k-1]}-Ax^{k-1})$ \cite{qi2005eigenvalues}.
The (algebraic) multiplicity of an eigenvalue is the multiplicity as a root of the characteristic polynomial.

Also the trace of a tensor can be defined such that it generalizes the trace of a matrix.
Shao, Qi, and Hu \cite{shao2015some} obtained a useful expression for the trace of general tensors. Using this, we will give an expression for the spectral moments of a hypergraph $H$ in terms of the number of connected subhypergraphs of $H$ in Section \ref{sec2.2}.

In order to describe the tensor trace formula of Shao et al.~\cite{shao2015some}, we introduce some related notation.
Let $\mathcal{F}_d=\{(i_1\alpha_1,\ldots,i_d\alpha_d):1\leq i_1\leq \cdots \leq i_d \leq n; \alpha_1, \ldots,\alpha_d \in [n]^{k-1}\}$.
Let $f=(i_1\alpha_1, \ldots, i_d\alpha_d) \in \mathcal{F}_d$, where $i_j\alpha_j \in [n]^k$, $j = 1, \ldots, d$.
For a $k$-order $n$-dimensional tensor $A=(a_{i_1\cdots i_k})$, let $\pi(f)=\prod_{j=1}^d{a_{i_j \alpha_j}}$. We will now construct a multi-digraph $D_f$, in which we let $(v_1,v_2)$ denote an arc from vertex $v_1$ to vertex $v_2$.
For $i_j\alpha_j=i_jv_1\ldots v_{k-1}$, we let the set of arcs from $i_j$ to $v_1, v_2, \ldots, v_{k-1}$ be $\theta(i_j\alpha_j)=\{(i_j, v_1), \ldots , (i_j, v_{k-1})\}$.
Let $\Theta(f)$ be the multi-set $\bigcup_{j=1}^d{\theta(i_j\alpha_j)}$.
Let $b(f)$ be the product of the factorials of the multiplicities of the arcs in $\Theta(f)$.
Let $c(f)$ be the product of the factorials of the outdegrees of all vertices in $\Theta(f)$.
Denote the set of all closed walks using all arcs in $\Theta(f)$ by $\mathcal{W}(f)$.

\begin{lem}\label{Shaodegonghsi}\cite[(3.5)]{shao2015some}
Let $A$ be a $k$-order $n$-dimensional tensor. Then
\begin{align*}
\mathrm{Tr}_d(A)=(k-1)^{n-1}\sum_{f \in \mathcal{F}_d}{\frac{b(f)}{c(f)}\pi_f(A)|\mathcal{W}(f)|}.
\end{align*}
\end{lem}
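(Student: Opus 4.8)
The statement is the tensor trace formula of Shao, Qi, and Hu~\cite{shao2015some}, so the plan is to reconstruct its proof rather than to invent a new one; the natural route passes from an algebraic generating-function expression for $\mathrm{Tr}_d(A)$ to a combinatorial sum over digraphs. I would start from the Morozov--Shakirov / Hu--Huang--Ling--Qi identity
\[
\mathrm{Tr}_d(A)=(k-1)^{n-1}\sum_{d_1+\cdots+d_n=d}\left(\prod_{i=1}^n\frac{1}{(d_i(k-1))!}\right)\left(\prod_{i=1}^n\nabla_i^{d_i}\right)\left(\prod_{i=1}^n x_i^{d_i(k-1)}\right),
\]
where $\nabla_i=\sum_{i_2,\ldots,i_k}a_{i i_2\cdots i_k}\,\partial_{x_{i_2}}\cdots\partial_{x_{i_k}}$ and the outer sum runs over all $n$-tuples of nonnegative integers with sum $d$. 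This identity is itself a consequence of Qi's resultant definition of the characteristic polynomial together with Newton's identities (the Morozov--Shakirov logarithmic-trace expansion); I would either cite it or re-derive it, noting that it already supplies the prefactor $(k-1)^{n-1}$ verbatim.

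Next I would expand the operator. Distributing each $\nabla_i^{d_i}$ over its defining sum amounts to choosing, for each of the $d$ ``slots'', a first coordinate $i$ and a multi-index $\alpha\in[n]^{k-1}$; after sorting the slots by first coordinate this data is exactly an element $f=(i_1\alpha_1,\ldots,i_d\alpha_d)\in\mathcal F_d$, carrying the weight $\pi_f(A)$ and generating the arc multiset $\Theta(f)$. For a fixed $f$ the remaining scalar factor has two sources: (i) the action of the commuting partial derivatives on $\prod_i x_i^{d_i(k-1)}$, which is nonzero precisely when every vertex of $\Theta(f)$ has equal in- and out-degree, so that $\Theta(f)$ is the arc set of a closed Eulerian-type walk system in $D_f$, and which then produces a product of factorials; and (ii) the multinomial count of how many orderings of the $d$ slots collapse to the same element of $\mathcal F_d$. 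The claim I would verify is that, after combining (i) with the $(d_i(k-1))!$ from the denominator and correcting by (ii) for indistinguishable parallel arcs, these factors reorganize exactly into $\tfrac{b(f)}{c(f)}$, while the number of admissible slot-orderings that actually realize a single closed walk is $|\mathcal W(f)|$; summing over $f\in\mathcal F_d$ then reassembles the right-hand side of the lemma.

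The main obstacle is this last bookkeeping step: matching the differential-operator evaluation, the $(d_i(k-1))!$ denominators, and the slot-symmetry multinomials against $\tfrac{b(f)}{c(f)}|\mathcal W(f)|$ without an off-by-a-combinatorial-factor error. Concretely, one must keep straight (a) the symmetry classes of orderings of equal entries $i_j\alpha_j$ versus equal arcs $(i_j,v)$, (b) the factorials of arc multiplicities against the factorials of vertex out-degrees, and (c) the precise bijection between the ``surviving monomial contractions'' in $\left(\prod_i\nabla_i^{d_i}\right)\prod_i x_i^{d_i(k-1)}$ and closed walks traversing each arc of $\Theta(f)$ its multiplicity of times -- essentially a BEST-theorem-style count of Eulerian circuits in $D_f$. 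Everything else -- the prefactor, the reduction to a sum over $\mathcal F_d$, and the vanishing of the contributions of unbalanced $f$ -- is routine once the starting identity is in hand.
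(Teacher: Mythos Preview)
The paper does not prove this lemma at all: it is quoted verbatim as \cite[(3.5)]{shao2015some} and used as a black box, so there is no ``paper's own proof'' to compare against. Your sketch is essentially the argument of Shao--Qi--Hu themselves (starting from the Morozov--Shakirov / Hu--Huang--Ling--Qi differential-operator identity and reorganizing into a sum over Eulerian multi-digraphs), so in that sense it is the expected route; but for the purposes of this paper no proof is required, only the citation.
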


\subsection{The spectral moments of hypergraphs}\label{sec2.2}

A hypergraph $H=(V_H,E_H)$ is called \emph{$k$-uniform} if each edge of $H$ contains exactly $k$ vertices.
Similar to the relation between graphs and matrices, there is a natural correspondence between uniform hypergraphs and tensors.
For a $k$-uniform hypergraph $H$ with $n$ vertices, its \emph{adjacency tensor} ${A}_H=(a_{i_1i_2\ldots i_k})$ is a $k$-order $n$-dimensional tensor, where
\[{a_{{i_1}{i_2} \ldots {i_k}}} = \left\{ \begin{array}{l}
 \frac{1}{{\left( {k - 1} \right)!}},{\kern 37pt}\mathrm{ if}{\kern 2pt}{ \left\{ {{i_1},{i_2},\ldots ,{i_k}} \right\} \in {E_H}}, \\
 0, {\kern 57pt}\mathrm{ otherwise}. \\
 \end{array} \right.\]
When $k=2$, ${A}_H$ is the usual adjacency matrix of the graph $H$.
The characteristic polynomial of the adjacency tensor $A_H$ is called the characteristic polynomial of hypergraph $H$.
We note that expressions for the coefficients of the characteristic polynomial of uniform hypergraphs have been obtained, by Clark and Cooper \cite{CLARK20211}, but we will not use these.

It is known that the $d$-th order spectral moment of a graph $G$ is equal to the $d$-th order trace of the adjacency matrix $A_G$ \cite{cvetkovic1980spectra}.
Similar to the case of graphs, the $d$-th order spectral moment $\mathrm{S}_d(H)$  of a hypergraph $H$ (i.e., the sum of $d$-th powers of all eigenvalues of $H$) is equal to the $d$-th order trace $\mathrm{Tr}_d(A_H)$ of the adjacency tensor $A_H$ \cite{hu2013determinants}.

From Lemma \ref{Shaodegonghsi}, we have
\begin{align}\label{zhanglianggongshi}
\mathrm{S}_d(H)=\mathrm{Tr}_d(A_H)=(k-1)^{|V(H)|-1}\sum_{f \in \mathcal{F}_d}{\frac{b(f)}{c(f)}\pi_f(A_H)|\mathcal{W}(f)|}.
\end{align}
Without loss of generality, we only consider $f=(i_1\alpha_1, \ldots, i_d\alpha_d) \in \mathcal{F}_d$ for which $\pi_f(A_H)|W(f)| \neq 0$.
Let $i_j\alpha_j=i_jv^{(j)}_1v^{(j)}_2\ldots v^{(j)}_{k-1}$ for all $j \in [d]$.
Since $\pi_f(A_H) = \prod_{j=1}^d(A_H)_{i_j\alpha_j}$, we know that $e_j=\{i_j,v^{(j)}_1,v^{(j)}_2,\ldots, v^{(j)}_{k-1}\}$ is a hyperedge of $H$, i.e., $f$ consists of $d$ rooted hyperedges.
We construct a $k$-uniform hypergraph $H_f$ such that $V(H_f)=\bigcup_{j=1}^d{e_j}$ and $E(H_f)=\bigcup_{j=1}^d{\{e_j\}}$.
Obviously, $H_f$ is a subhypergraph of $H$.
Consider next a set of representatives of isomorphic such subhypergraphs (so-called motifs)
$$\mathcal{H}_d=\{\widehat{H}:H_f \cong \widehat{H} ~\mathrm{and}~ \pi_f(A_H)|W(f)| \neq 0 ~\mathrm{for}~ \mathrm{some} ~f \in \mathcal{F}_d\}.$$
For $\widehat{H} \in \mathcal{H}_d $, let $\mathcal{F}_d(\widehat{H})=\{f: f \in \mathcal{F}_d ~\mathrm{and} ~H_f \cong \widehat{H}\}$.
We use $N_H(\widehat{H})$ to denote the number of subhypergraphs of $H$ which are isomorphic to $\widehat{H}$.
By \eqref{zhanglianggongshi}, we have
\begin{align*}\label{yibanchaotugongshi}
\mathrm{S}_d(H)
&=(k-1)^{|V(H)|-1}((k-1)!)^{-d}\sum_{\widehat{H} \in \mathcal{H}_d}
{N_H(\widehat{H})\sum_{f \in \mathcal{F}_d(\widehat{H})}{\frac{b(f)}{c(f)}|\mathcal{W}(f)|}}.
\end{align*}
We call the coefficient
\begin{equation}\label{spectralcoefficient}
c_d(\widehat{H})=((k-1)!)^{-d}\sum_{f \in \mathcal{F}_d(\widehat{H})}{\frac{b(f)}{c(f)}|\mathcal{W}(f)|}
\end{equation}
the $d$-th order {\em spectral moment coefficient} of $\widehat{H}$.
We note that for a graph $G$, the coefficient $c_d(G)$ equals the number of covering closed walks of length $d$, that is, closed walks that use each edge at least once.
We thus give the following subgraph structure interpretation of the spectral moments of hypergraphs:
\begin{align}\label{yibanchaotupujugongshi}
\mathrm{S}_d(H)=(k-1)^{|V(H)|-1}\sum_{\widehat{H} \in \mathcal{H}_d}{c_d}(\widehat{H})N_{H}(\widehat{H}).
\end{align}
How to reduce and calculate the $d$-th order spectral moment coefficient
${c_d}(\widehat{H})$ is the key question in Section \ref{michaotupuju}. Here we make a first few steps in this direction.
For $f \in \mathcal{F}_d(\widehat{H})$, we constructed in Section \ref{sec:2.1} a  multi-digraph $D_f=\left(V(D_f),E(D_f)\right)$ with $V(D_f)=V(\widehat{H})$ and $E(D_f)=\Theta(f)$.
Since $\mathcal{W}(f)$ is the set of all closed walks with all arcs in $\Theta(f)$, we know that $|\mathcal{W}(f)| \neq 0$ if and only if $D_f$ is Eulerian.

Because isomorphic graphs $D_f$ contribute the same to the spectral moment coefficient in \eqref{spectralcoefficient}, we consider
the set of representatives of isomorphic Eulerian multi-digraphs
$$\mathfrak{D}_d(\widehat{H})=\{D: ~ D_f\cong D  ~ \mathrm{is} ~ \mathrm{Eulerian} ~ \mathrm{for}~\mathrm{some} ~f \in \mathcal{F}_d(\widehat{H})\}.$$
Then
\begin{align}\label{youxiangtu}
c_d(\widehat{H})=((k-1)!)^{-d}\sum_{D \in \mathfrak{D}_d(\widehat{H})}{|\{f:f \in \mathcal{F}_d(\widehat{H})~ \mathrm{and} ~ D_f\cong D \}|\frac{b(f)}{c(f)}|\mathcal{W}(f)|}.
\end{align}
In \eqref{youxiangtu}, $|\mathcal{W}(f)|$ is the number of the Eulerian walks in $D$.
An expression for the number of  the Eulerian walks in a multi-digraph  was  given by the so-called ``BEST Theorem" \cite{matthew2016multi}.

\begin{lem} \cite[Theorem 6]{matthew2016multi}\label{biwalkgongshi}
Let $D=(V(D),E(D))$ be a Eulerian multi-digraph. Let $b(D)$ denote the product of the factorials of the multiplicities of the arcs in $E(D)$. Then the number of Eulerian walks in $D$ is
\begin{align*}
\frac{|E(D)|}{b(D)}t(D)\prod_{v \in V(D)}{(\mathrm{deg}_D^+(v)-1)!},
\end{align*}
where $\mathrm{deg}_D^+(v)$ is the outdegree of vertex $v$ and $t(D)$ is the number of spanning trees of $D$.
\end{lem}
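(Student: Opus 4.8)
The plan is to prove this version of the BEST theorem (de Bruijn--van Aardenne-Ehrenfest--Smith--Tutte) by the classical bijection between Eulerian circuits and rooted in-arborescences equipped with exit-orderings, and then to account for the two normalizing factors separately: $|E(D)|$ comes from the choice of a starting arc, and $b(D)$ from the fact that parallel arcs are indistinguishable.

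First I would reduce to the case of distinguishable arcs. Replace each parallel class of $m$ arcs from $u$ to $v$ by $m$ labelled copies, obtaining a digraph $D'$ on the same vertex set with $|E(D')|=|E(D)|$ and the same out-degrees. Each Eulerian walk of $D$, regarded as a cyclic sequence of vertices (equivalently, of parallel classes), lifts to exactly $b(D)=\prod_i m_i!$ Eulerian walks of $D'$, since the $m_i$ labelled copies of the $i$-th class may be permuted among its $m_i$ traversals. Likewise $t(D)$, interpreted through the (weighted) matrix--tree theorem, counts arborescences of $D'$ with labelled arcs. Hence it suffices to show that the number of Eulerian walks of $D'$ equals $|E(D')|\,t(D')\prod_v(\deg^+_{D'}(v)-1)!$.

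Next I would fix an arc $e^{\ast}=(w,u)$ of $D'$ and prove that the number of Eulerian circuits of $D'$ whose first arc is $e^{\ast}$ equals $t_w(D')\prod_v(\deg^+(v)-1)!$, where $t_w(D')$ is the number of spanning in-arborescences of $D'$ rooted at $w$ (every non-root vertex has exactly one tree-arc leaving it, all arcs pointing toward $w$). The bijection goes as follows. From an Eulerian circuit, for each $v\neq w$ record the \emph{last} arc leaving $v$; a ``latest-exit'' argument shows these $n-1$ arcs form an in-arborescence rooted at $w$ (if they contained a directed cycle, the vertex of that cycle whose recorded arc occurs latest in the circuit would still have to be left again afterward, a contradiction, since the circuit ends at $w$ which is not on the cycle). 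Conversely, given an in-arborescence $A$ rooted at $w$ together with a linear order on the out-arcs of each vertex in which $A$'s arc is last (for $v\neq w$) and $e^{\ast}$ is first (for $v=w$) --- there are $\prod_v(\deg^+(v)-1)!$ such families --- the greedy walk that starts with $e^{\ast}$ and always leaves the current vertex by the smallest unused out-arc traverses every arc: by the degree-balance condition it can only halt at $w$, and if some arcs remained unused then at any vertex with an unused out-arc the tree-arc $A(v)$ would be unused, and following $A$ toward the root would stay among such vertices all the way to $w$, contradicting that $w$'s out-arcs are all used. These two maps are mutually inverse, giving the count; it is independent of the choice of $e^{\ast}$ because $t_w(D')$ is independent of $w$ for Eulerian digraphs (matrix--tree theorem plus $\deg^+=\deg^-$ at every vertex). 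Finally, summing over the $|E(D')|$ possible first arcs --- each Eulerian walk having a unique first arc --- introduces the factor $|E(D')|$, and dividing by $b(D)$ transfers the result from $D'$ back to $D$.

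The main obstacle is the bijection itself: the two structural claims that the last-exit arcs form an arborescence and that the greedy reconstruction never gets stuck with arcs to spare. Both are elementary but hinge on the careful ``latest-exit'' and ``follow $A$ toward the root'' arguments above. Beyond that, the only delicate point is the bookkeeping of the normalization --- ensuring that the passage between $D$ and the arc-labelled digraph $D'$ contributes exactly the factor $b(D)$ in the walk count (and, consistently, in the spanning-tree count), and that $t_w(D')$ is genuinely root-independent so that ``$t(D)$'' is well defined.
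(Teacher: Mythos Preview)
The paper does not prove this lemma: it is quoted verbatim from \cite[Theorem~6]{matthew2016multi} as the ``BEST Theorem'' and used as a black box. Your proposal supplies the classical de Bruijn--van Aardenne-Ehrenfest--Smith--Tutte argument (last-exit arborescence bijection, root-independence of $t_w$ via the matrix--tree theorem, then the two normalizations for the starting arc and for indistinguishable parallel arcs), and it is correct. One small wording slip: in the first reduction you call an Eulerian walk of $D$ a ``cyclic sequence of vertices,'' but in the paper's usage (and in your own later argument) walks carry a first arc, so they are linear sequences; the lifting-by-$b(D)$ count is unaffected, since permuting the labels within each parallel class acts freely on linear arc-labelled walks just as well.
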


In \eqref{youxiangtu}, we have $b(f)=b(D)$ and $c(f)=\prod_{v \in V(D)}{\mathrm{deg}_D^+(v)!}$.
From Lemma \ref{biwalkgongshi}, we get the following expression for the $d$-th order spectral moment coefficient of a fixed hypergraph $\widehat{H}$.

\begin{lem}\label{pujuxishuyinli}
The $d$-th order spectral moment coefficient of a $k$-uniform hypergraph $\widehat{H}$ is
\begin{align*}
c_d(\widehat{H})=d(k-1)((k-1)!)^{-d}\sum_{D \in \mathfrak{D}_d(\widehat{H})}{\frac{\left|\{f:f \in \mathcal{F}_d(\widehat{H})~ \mathrm{and} ~ D_f\cong D \}\right|t(D)}{\prod_{v \in V(D)}\mathrm{deg}_D^+(v)}}.
\end{align*}
\end{lem}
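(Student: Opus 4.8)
The plan is to take the expression \eqref{youxiangtu} for $c_d(\widehat H)$ as the starting point and simply insert the BEST-theorem count from Lemma \ref{biwalkgongshi} for $|\mathcal{W}(f)|$, then simplify the factorials. First I would record the three facts that make the substitution clean. The quantities $b(f)$, $c(f)$ and $|\mathcal{W}(f)|$ depend only on the multi-digraph $D_f$ (recall $\mathcal{W}(f)$ is precisely the set of Eulerian walks in $D_f$), so they are constant on each isomorphism class in $\mathfrak{D}_d(\widehat H)$; for a representative $D$ we therefore have $b(f)=b(D)$ and $c(f)=\prod_{v\in V(D)}\mathrm{deg}_D^+(v)!$, as already noted below \eqref{youxiangtu}. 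Second, $|E(D)|=|\Theta(f)|=d(k-1)$, because $\Theta(f)=\bigcup_{j=1}^d\theta(i_j\alpha_j)$ is a multiset union of $d$ sets each of exactly $k-1$ arcs. Third, every $D\in\mathfrak{D}_d(\widehat H)$ is Eulerian by definition, so Lemma \ref{biwalkgongshi} applies to it.

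With these in hand, Lemma \ref{biwalkgongshi} gives, for any $f$ with $D_f\cong D$,
\begin{align*}
|\mathcal{W}(f)| = \frac{d(k-1)}{b(D)}\,t(D)\prod_{v\in V(D)}(\mathrm{deg}_D^+(v)-1)!.
\end{align*}
Multiplying by $\tfrac{b(f)}{c(f)}=\tfrac{b(D)}{\prod_{v}\mathrm{deg}_D^+(v)!}$, the factor $b(D)$ cancels, and using $\tfrac{(\mathrm{deg}_D^+(v)-1)!}{\mathrm{deg}_D^+(v)!}=\tfrac{1}{\mathrm{deg}_D^+(v)}$ for each $v$ collapses the remaining factorials to
\begin{align*}
\frac{b(f)}{c(f)}\,|\mathcal{W}(f)| = \frac{d(k-1)\,t(D)}{\prod_{v\in V(D)}\mathrm{deg}_D^+(v)}.
\end{align*}
Substituting this into \eqref{youxiangtu} and pulling the constant $d(k-1)$ out of the sum over $D$ yields exactly the claimed identity for $c_d(\widehat H)$.

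The argument is essentially bookkeeping, since all the genuine content is imported through Lemma \ref{biwalkgongshi}; I therefore do not anticipate a real obstacle. The one point that needs a word of justification is that the denominator $\prod_{v\in V(D)}\mathrm{deg}_D^+(v)$ is well defined, i.e.\ that every vertex of $D$ has positive outdegree: since $V(D)=V(\widehat H)$, each vertex lies in some rooted hyperedge $e_j$, and if a vertex $v$ never occurs as a root $i_j$ it has outdegree $0$ but indegree at least $1$, contradicting that $D$ is Eulerian — so such $D$ do not arise in $\mathfrak{D}_d(\widehat H)$. Apart from this, the only care needed is to keep the factorial cancellations and the identity $|E(D)|=d(k-1)$ straight.
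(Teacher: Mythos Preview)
Your proposal is correct and is exactly the approach the paper takes: the paper simply notes that $b(f)=b(D)$ and $c(f)=\prod_{v\in V(D)}\mathrm{deg}_D^+(v)!$, then invokes Lemma~\ref{biwalkgongshi} (together with $|E(D)|=d(k-1)$) to collapse \eqref{youxiangtu} to the stated formula. Your write-up is in fact more detailed than the paper's, which leaves the factorial cancellation and the positivity of the outdegrees implicit.
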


We will use this result to further reduce the spectral moment coefficients of power hypergraphs in Section \ref{michaotupuju}.

The spectrum of a hypergraph is said to be $k$-symmetric if it is invariant under a rotation of an angle $2\pi/k$ in the complex plane \cite{cooper2012spectra}.
The spectrum of a $k$-power hypergraph is indeed $k$-symmetric \cite{hu2014eigenvectors}, and hence  $\mathrm{S}_d(G^{(k)}) = 0$ for $k \nmid d$. Note that Lemma \ref{xiangyi} below shows $k$-symmetry of the eigenvalues, but it does not say anything about the corresponding multiplicities.

From \eqref{yibanchaotupujugongshi}, we then have
\begin{align}\label{shizi5}
\mathrm{S}_d(G^{(k)})=\left\{ \begin{array}{l}(k-1)^{|V(G^{(k)})|-1}\sum_{\widehat{G} \in \mathcal{G}_d}{c_d}(\widehat{G}^{(k)})N_{G}(\widehat{G}),~~~~ k\mid d,\\
0,{\kern 207pt}k\nmid d,
\end{array} \right.
\end{align}
where $\mathcal{G}_d=\{\widehat{G}:\widehat{G}~\mathrm{is}~ \mathrm{a}~ \mathrm{subgraph} ~\mathrm{of}~ G ~\mathrm{and}~ \mathfrak{D}_d(\widehat{G}^{(k)}) \neq \emptyset\}$.

\subsection{Signed graphs and the spectral radius}\label{sec:2.3}

All the distinct eigenvalues of the power hypergraph $G^{(k)}$ are given by eigenvalues of signed subgraphs as follows \cite{Chen2022All}.

\begin{lem}\cite{Chen2022All}\label{xiangyi}
The complex number $\lambda$ is an eigenvalue of $G^{(k)}$ if and only if \\
\noindent$(a)$ some signed induced subgraph of $G$ has an eigenvalue $\sigma$ such that $\sigma^2=\lambda^k$, when $k=3$;\\
\noindent$(b)$ some signed subgraph of $G$ has an eigenvalue $\sigma$ such that $\sigma^2=\lambda^k$, when $k\geq4$.
\end{lem}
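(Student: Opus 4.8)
The plan is to analyze the eigen-equation $A_{G^{(k)}}x^{k-1}=\lambda x^{[k-1]}$ directly, eliminating the $k-2$ added vertices of each edge so as to reduce it to a signed-graph eigen-equation on a subgraph of $G$, and conversely to inflate a signed-graph eigenvector back into a hypergraph eigenvector. Labelling the original vertices by $V(G)$ and writing $w^e_1,\dots,w^e_{k-2}$ for the vertices added to an edge $e=\{u,v\}$, the eigen-equation reads
\[
\sum_{e=\{u,v\}\ni v}x_u\prod_{j}x_{w^e_j}=\lambda x_v^{k-1}\ \ (v\in V(G)),\qquad x_ux_v\prod_{i\ne j}x_{w^e_i}=\lambda x_{w^e_j}^{k-1}\ \ (\text{each }w^e_j).
\]
The case $\lambda=0$ is immediate (take the one-vertex signed subgraph, and note that $0$ is always an eigenvalue of $G^{(k)}$), so from now on $\lambda\ne0$.

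For the ``only if'' direction I would take an eigenvector $x$ for $\lambda\ne0$ and set $S=\{v\in V(G):x_v\ne0\}$, which is nonempty. Multiplying the equation at $w^e_j$ by $x_{w^e_j}$ shows that $x_{w^e_j}^{k}=x_ux_vP_e/\lambda$ is independent of $j$, where $P_e:=\prod_j x_{w^e_j}$; taking $k$-th powers and substituting then yields the key identity $P_e^{2}=(x_ux_v/\lambda)^{k-2}$. From these observations one reads off that every edge $e$ is either \emph{active} (all of $x_u,x_v,x_{w^e_1},\dots,x_{w^e_{k-2}}$ nonzero) or else contributes $0$ to every equation. Fixing square roots $\rho$ of $\lambda$ and $s_v$ of $x_v$ for $v\in S$, the identity forces $P_e=\pi(e)(s_us_v/\rho)^{k-2}$ for a well-defined sign $\pi(e)\in\{\pm1\}$ on each active edge $e$. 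Substituting this into the equation at $v\in S$ and simplifying, the nonzero vector $y=(s_v^k)_{v\in S}$ satisfies $A_{H_\pi}y=\sigma y$ with $\sigma=\rho^k$, where $H$ is the subgraph of $G$ with vertex set $S$ and the active edges as its edge set; since $\sigma^2=\rho^{2k}=\lambda^k$, this is the required signed subgraph. When $k=3$ the identity specialises to $x_{w^e}^2=x_ux_v/\lambda$, so $x_u,x_v\ne0$ already forces $x_{w^e}\ne0$; hence every edge of $G$ with both ends in $S$ is active and $H=G[S]$ is \emph{induced} — which is exactly why $(a)$ requires an induced subgraph while $(b)$ does not.

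For the ``if'' direction I would reverse this construction. Given a signed subgraph $H_\pi$ of $G$ (induced if $k=3$) with $A_{H_\pi}y=\sigma y$, $y\ne0$, and a $\lambda$ with $\lambda^k=\sigma^2$, choose $\rho$ with $\rho^2=\lambda$; after possibly replacing $\pi$ by $-\pi$ (permissible, since $A_{H_{-\pi}}=-A_{H_\pi}$ has eigenvalue $-\sigma$) one may assume $\rho^k=\sigma$. Pick $k$-th roots $t_v$ of $y_v$ and put $x_v=t_v^2$; on an edge $e=\{u,v\}$ with $y_uy_v\ne0$ prescribe $P_e:=\pi(e)(t_ut_v/\rho)^{k-2}$ and take the $x_{w^e_j}$ to be $k$-th roots of $A_e:=x_ux_vP_e/\lambda$ chosen so that $\prod_j x_{w^e_j}=P_e$ (this is possible because $P_e^{k}=A_e^{k-2}$, leaving precisely the freedom to twist the individual roots by $k$-th roots of unity; for $k=3$ it amounts to a single choice of square root), while on every other edge the added vertices are set to $0$. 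A direct check, using $A_{H_\pi}y=\sigma y$ at the original vertices and $x_ux_vP_e=\lambda x_{w^e_j}^{k}$ at the added ones, confirms that $x$ is a nonzero eigenvector of $G^{(k)}$ for $\lambda$.

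The verifications above are routine; the delicate point, and the step I expect to be the main obstacle, is the bookkeeping of the multivalued square and $k$-th roots. Concretely, one must argue that in the forward direction the signs $\pi(e)$ genuinely lie in $\{\pm1\}$, which is exactly the content of $P_e^2=(x_ux_v/\lambda)^{k-2}$, and that in the converse the added-vertex values can simultaneously meet $x_{w^e_j}^{k}=A_e$ and $\prod_j x_{w^e_j}=P_e$, which hinges on the companion identity $P_e^{k}=A_e^{k-2}$. The parity of $k$ intervenes only through the choice of $\rho$ versus the harmless flip $\pi\mapsto-\pi$, and the $k=3$ versus $k\ge4$ dichotomy intervenes only through whether nonzero endpoint values force the added vertices to be nonzero.
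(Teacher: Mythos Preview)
The paper does not prove this lemma; it is quoted verbatim from the authors' earlier work \cite{Chen2022All}, so there is no ``paper's own proof'' to compare against. Your direct analysis of the eigen-equation is, however, essentially the standard route to this result and is correct. The reduction at the core vertices --- multiplying the $w^e_j$-equation by $x_{w^e_j}$ to get $x_{w^e_j}^k=x_ux_vP_e/\lambda$ and then the key identity $P_e^{2}=(x_ux_v/\lambda)^{k-2}$ on active edges --- is exactly what one needs, and your substitution $y_v=s_v^{k}$, $\sigma=\rho^{k}$ cleanly turns the vertex equations into $A_{H_\pi}y=\sigma y$. The observation that for $k=3$ the single core-vertex equation $x_ux_v=\lambda x_{w^e}^{2}$ forces activity whenever both endpoints are nonzero, while for $k\ge4$ one may legitimately kill an edge by zeroing its core vertices, is precisely the reason for the induced/non-induced dichotomy in parts~(a) and~(b).

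Two very small points worth tightening. First, in the converse you must also assign $x_v=0$ for $v\in V(G)\setminus V(H)$ and set all core vertices to~$0$ on every edge of $G$ that is not an active edge of $H$; you do this implicitly, but it is worth saying, since one then has to check the core-vertex equations on those edges as well (they hold because for $k\ge4$ the product $\prod_{i\ne j}x_{w^e_i}$ already vanishes, and for $k=3$ the edge necessarily has an endpoint with $x$-value~$0$ by inducedness). Second, your parenthetical ``for $k=3$ it amounts to a single choice of square root'' is a slight misstatement: there is no choice at all, one simply sets $x_{w^e}=P_e$, and the required identity $x_{w^e}^{3}=A_e$ is exactly $P_e^{3}=A_e^{1}$, the $k=3$ instance of $P_e^{k}=A_e^{k-2}$. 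Neither point affects the validity of the argument.
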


We call signed graphs $G_{\pi}$ and $G_{\pi'}$ switching equivalent if there is a diagonal matrix $D$ with diagonal entries $\pm 1$ such that $A(G_{\pi'})=D^{-1}A(G_{\pi})D$. Clearly, switching equivalent signed graphs have the same spectrum. The signed graphs $G_{+}$ and $G_{-}$ are the ones with all signs $+1$ and all signs $-1$, respectively.
If $G_{\pi}$ is switching equivalent to $G_{+}$, then $G_{\pi}$ is called a balanced signed graph. Note that $G_{+}$ and $G_{-}$ are switching equivalent if and only if $G$ is bipartite.

\begin{lem}\cite[Theorem 3.1]{stanic2019bounding}\label{stanic}
A signed graph $G_{\pi}$ contains a balanced spanning subgraph, say $H_{\tilde{\pi}}$, which satisfies $\lambda_{\mathrm{max}}(G_{\pi}) \leq \lambda_{\mathrm{max}}(H_{\tilde{\pi}})=\rho(H)$.
\end{lem}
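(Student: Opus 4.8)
The plan is to produce the balanced subgraph explicitly as the ``positive part'' of a suitably switched copy of $G_{\pi}$ and then to compare Rayleigh quotients. First I would record the elementary fact that a balanced signed graph is switching equivalent to the all-positive signing of its underlying graph, so that its spectrum coincides with the spectrum of the underlying (unsigned) graph; this delivers the equality $\lambda_{\mathrm{max}}(H_{\tilde{\pi}})=\rho(H)$ for free as soon as a balanced spanning $H_{\tilde{\pi}}$ has been constructed, and reduces the lemma to proving the single inequality $\lambda_{\mathrm{max}}(G_{\pi})\le \lambda_{\mathrm{max}}(H_{\tilde{\pi}})$.

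Next I would take a unit eigenvector $x$ of $A(G_{\pi})$ for $\lambda:=\lambda_{\mathrm{max}}(G_{\pi})$ and switch by the diagonal $\pm 1$ matrix $D$ with $D_{uu}=\operatorname{sign}(x_u)$, putting $D_{uu}=1$ wherever $x_u=0$. Since switching preserves the spectrum and sends eigenvectors through $D$, the vector $y:=Dx\ge 0$ is an eigenvector of $A(G_{\pi'})$ for $\lambda$, where $G_{\pi'}$ is the switched graph (with sign function $\pi'(uv)=D_{uu}\pi(uv)D_{vv}$). Now let $H$ be the spanning subgraph of $G$ whose edge set is $\{e\in E(G):\pi'(e)=+1\}$, equipped with the all-positive signing; then $H$ is balanced by construction. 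Switching $H$ back by $D$ yields a signed graph $H_{\tilde{\pi}}$ that is switching equivalent to $H$, hence still balanced with $\lambda_{\mathrm{max}}(H_{\tilde{\pi}})=\rho(H)$, and a short check shows that its edge signs are exactly the restriction $\pi|_{E(H)}$, so that $H_{\tilde{\pi}}$ is an honest spanning signed subgraph of $G_{\pi}$.

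Finally I would compare the two quadratic forms at the nonnegative vector $y$. Writing $\lambda=y^{\top}A(G_{\pi'})\,y=2\sum_{\pi'(e)=+1}y_uy_v-2\sum_{\pi'(e)=-1}y_uy_v$ and using $y\ge 0$, the second sum is nonnegative, hence $\lambda\le 2\sum_{\pi'(e)=+1}y_uy_v=y^{\top}A(H)\,y\le \lambda_{\mathrm{max}}(A(H))=\rho(H)$, where the second inequality is the variational characterization of the largest eigenvalue of a symmetric matrix together with $\|y\|=1$, and the final equality uses that $H$ carries the all-positive signing. Combined with the reduction in the first paragraph, this is precisely $\lambda_{\mathrm{max}}(G_{\pi})\le \lambda_{\mathrm{max}}(H_{\tilde{\pi}})=\rho(H)$.

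I do not expect a serious obstacle. The only points that genuinely require care are the switching bookkeeping --- verifying that deleting the negative edges of the \emph{switched} graph returns, after switching back by $D$, a subgraph of the \emph{original} $G_{\pi}$ with the original signs --- and the degenerate cases (no edges at all, or a top eigenvector with some zero coordinates), both handled by the convention $D_{uu}=1$ on the zero coordinates of $x$. It is also worth noting that when $G_{\pi}$ is already balanced the construction returns $H_{\tilde{\pi}}=G_{\pi}$ and the inequality becomes an equality, so the bound is best possible in general.
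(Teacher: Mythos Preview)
Your argument is correct. The paper does not supply its own proof of this lemma; it quotes the result from Stani\'c \cite{stanic2019bounding}, and your construction---switch so that a top eigenvector becomes nonnegative, then keep only the positive edges of the switched graph---is exactly the approach taken there, so there is nothing further to compare.
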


We obtain the following result from Lemma \ref{stanic}.

\begin{lem}\label{yinlipubanjing}
Let $G$ be a connected graph and $\pi \in \Pi$. Then $\rho(G_{\pi})\leq \rho(G)$,
with equality if and only if $G_{\pi}$ is switching equivalent to $G_{+}$ or $G_{-}$.
Moreover, $G_{\pi}$ has an eigenvalue $\rho(G)$ if and only if it is switching equivalent to $G_{+}$, and it has an eigenvalue $-\rho(G)$ if and only if it is switching equivalent to $G_{-}$.
\end{lem}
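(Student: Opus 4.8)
The plan is to derive everything from Lemma~\ref{stanic} (Stani\'c's theorem) together with standard Perron--Frobenius theory for the connected graph $G$. First I would prove the inequality $\rho(G_\pi)\le\rho(G)$. Apply Lemma~\ref{stanic} to $G_\pi$ to obtain a balanced spanning subgraph $H_{\tilde\pi}$ with $\lambda_{\max}(G_\pi)\le\lambda_{\max}(H_{\tilde\pi})=\rho(H)$, where $H$ is the underlying (unsigned) graph of $H_{\tilde\pi}$. Since $H$ is a spanning subgraph of the connected graph $G$, monotonicity of the spectral radius under taking subgraphs gives $\rho(H)\le\rho(G)$, and applying the same bound to $-G_\pi$ (whose eigenvalues are the negatives of those of $G_\pi$) controls $\lambda_{\min}(G_\pi)$ from below by $-\rho(G)$; together these yield $\rho(G_\pi)=\max\{\lambda_{\max}(G_\pi),-\lambda_{\min}(G_\pi)\}\le\rho(G)$.

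Next I would treat the equality case. The chain of inequalities $\lambda_{\max}(G_\pi)\le\rho(H)\le\rho(G)$ is tight only if $\rho(H)=\rho(G)$, and since $G$ is connected, a proper spanning subgraph of $G$ has strictly smaller spectral radius; hence $H=G$, so $G_\pi$ itself (up to switching) is a balanced signed graph on all of $G$, i.e. switching equivalent to $G_+$. Moreover in that case $\lambda_{\max}(G_\pi)=\rho(G)$, so $G_\pi$ does have $\rho(G)$ as an eigenvalue. Conversely, if $G_\pi$ is switching equivalent to $G_+$ then its spectrum equals that of $G$, so $\rho(G)$ is an eigenvalue and $\rho(G_\pi)=\rho(G)$. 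The statement about $-\rho(G)$ follows by the same argument applied to the signed graph $-G_\pi$: $G_\pi$ has eigenvalue $-\rho(G)$ iff $-G_\pi$ has eigenvalue $\rho(G)$ iff $-G_\pi$ is switching equivalent to $G_+$ iff $G_\pi$ is switching equivalent to $G_-$. Finally, for the "equality iff" clause of the first sentence: if $\rho(G_\pi)=\rho(G)$ then one of $\pm\rho(G)$ is an eigenvalue of $G_\pi$, so by what was just shown $G_\pi$ is switching equivalent to $G_+$ or to $G_-$; the reverse implication is immediate since both $G_+$ and $G_-$ (the latter because $-G_-$ is switching equivalent to $G_+$... more directly, $G_-$ has eigenvalues $\pm$ those of $G$ up to sign) have spectral radius $\rho(G)$.

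The main obstacle is making the equality analysis airtight: one must invoke that a connected graph has strictly larger spectral radius than any proper spanning subgraph (a standard Perron--Frobenius fact, since the Perron eigenvector of $G$ is strictly positive and deleting an edge strictly decreases the quadratic form $x^\top A x$ on it), and one must be careful that Lemma~\ref{stanic} produces a balanced subgraph on the \emph{same vertex set}, so that "balanced spanning subgraph equal to $G$" really means "switching equivalent to $G_+$." Once those two points are pinned down, the rest is bookkeeping with the sign flip $G_\pi\mapsto -G_\pi$ and the observation that switching equivalence preserves the spectrum.
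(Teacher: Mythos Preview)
Your proposal is correct and follows essentially the same strategy as the paper: both arguments derive everything from Lemma~\ref{stanic} together with the Perron--Frobenius fact that a proper spanning subgraph of a connected graph has strictly smaller spectral radius, and both handle the negative side via the sign flip $G_\pi\mapsto G_{-\pi}$.

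There is one small structural difference worth noting. The paper first establishes the equality case $\rho(G_\pi)=\rho(G)\Leftrightarrow G_\pi$ switching equivalent to $G_+$ or $G_-$, and then deduces the ``moreover'' clause from it via a bipartiteness argument (if $G_\pi$ has eigenvalue $\rho(G)$ but were only equivalent to $G_-$, then $G$ would have to be bipartite, forcing $G_+\sim G_-$). You instead prove the ``moreover'' clause directly: $\rho(G)$ an eigenvalue forces $\lambda_{\max}(G_\pi)=\rho(G)$, hence the chain from Lemma~\ref{stanic} is tight, hence $H=G$ and $G_\pi$ is balanced. This is cleaner and avoids the bipartiteness detour entirely; the equality clause then follows immediately from the two ``moreover'' statements.
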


\begin{proof}
Note that $\rho(G_{\pi})=\max\{\lambda_{\max}(G_{\pi}),\lambda_{\max}(G_{-\pi})\}$.
It is clear that if $G_{\pi}$ is switching equivalent to $G_{+}$, then it has an eigenvalue $\rho(G)$, and if it is switching equivalent to $G_{-}$, then it has an eigenvalue $-\rho(G)$, hence in both cases $\rho(G_{\pi})= \rho(G)$.

From Lemma \ref{stanic}, we have that $\rho(G_{\pi})=\lambda_{\max}(G_{\pm\pi}) \leq \rho(H) \leq \rho(G)$. If equality holds, then $H=G$, and hence $G_{\pm \pi}=H_{\tilde{\pi}}$, which is balanced, i.e., $G_{\pi}$ is switching equivalent to $G_{+}$ or $G_{-}$.

If $G_{\pi}$ has an eigenvalue $\rho(G)$, and it would not be switching equivalent to $G_{+}$, then by the above it must be switching equivalent to $G_{-}$, in which case it also has an eigenvalue $-\rho(G)$. But then $G$ is bipartite, in which case $G_{+}$ is switching equivalent to $G_{-}$, and hence to $G_{\pi}$ after all. Similarly it follows that if $G_{\pi}$ has an eigenvalue $-\rho(G)$, then it is switching equivalent to $G_{-}$.
\end{proof}

\section{The number of parity-closed walks via spectral moments of signed graphs}\label{sec:pcwalkssigned}
Our first goal is to show that the number of parity-closed walks $P_d$ of length $d$ in a graph $G$ is the arithmetic mean of spectral moments of all signed graphs with underlying graph $G$. Although this result does not concern hypergraphs, it will become relevant when we apply the results on hypergraphs in Section \ref{sec4}.

Recall that $\Pi$  denotes the set of all sign functions on the edge set $E$ of $G$.

\begin{thm}\label{xindingli1}
Let $G$ be a graph.
Then
\begin{align*}
{P}_d=2^{-|E|}\sum_{\pi \in \Pi}{\mathrm{S}_d(G_{\pi})}.
\end{align*}
\end{thm}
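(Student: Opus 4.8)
The plan is to expand the right-hand side using the standard moment-as-closed-walk-count identity for signed graphs and then observe a cancellation. For a signed graph $G_\pi$ with signed adjacency matrix $A(G_\pi)$, we have $\mathrm{S}_d(G_\pi) = \mathrm{Tr}\big(A(G_\pi)^d\big) = \sum_{W}\prod_{e \in W}\pi(e)$, where the sum is over all closed walks $W$ of length $d$ in $G$ and the product is over the edges traversed by $W$, counted with multiplicity. Substituting this into $2^{-|E|}\sum_{\pi \in \Pi}\mathrm{S}_d(G_\pi)$ and swapping the order of summation gives
\begin{align*}
2^{-|E|}\sum_{\pi \in \Pi}\mathrm{S}_d(G_\pi) = \sum_{W}\, 2^{-|E|}\sum_{\pi \in \Pi}\prod_{e \in W}\pi(e),
\end{align*}
where the outer sum runs over all closed walks $W$ of length $d$ in $G$.

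Next I would evaluate the inner average over sign functions. If $m_e(W)$ denotes the number of times $W$ traverses edge $e$, then $\prod_{e \in W}\pi(e) = \prod_{e \in E}\pi(e)^{m_e(W)}$, and averaging this monomial over all $\pi \in \{\pm1\}^E$ factors as $\prod_{e \in E}\big(\tfrac12\sum_{\pi(e)=\pm1}\pi(e)^{m_e(W)}\big)$. Each factor equals $1$ if $m_e(W)$ is even and $0$ if $m_e(W)$ is odd. Hence the whole product is $1$ precisely when every edge is used an even number of times by $W$ — that is, when $W$ is a parity-closed walk — and $0$ otherwise. Therefore the surviving terms in the sum over $W$ are exactly the parity-closed walks of length $d$, each contributing $1$, which gives $2^{-|E|}\sum_{\pi \in \Pi}\mathrm{S}_d(G_\pi) = P_d$.

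There is no real obstacle here; the only points that need care are bookkeeping ones. First, one should make explicit that $\mathrm{S}_d(G_\pi) = \mathrm{Tr}(A(G_\pi)^d)$, which is the matrix case ($k=2$) of the moment–trace identity already used in Section \ref{sec2.2}. Second, in the closed-walk expansion of $\mathrm{Tr}(A(G_\pi)^d)$ one must treat a closed walk as a fixed sequence of $d$ steps with a distinguished starting vertex (so that walks differing by cyclic rotation or by traversal direction are counted separately), and use the same convention in the definition of $P_d$; with a consistent convention the factor-wise computation above is unaffected since it only depends on the multiset of edges traversed. I expect the write-up to be short.
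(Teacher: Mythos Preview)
Your proof is correct and follows essentially the same approach as the paper: both expand the trace of the signed adjacency matrix as a sum over closed walks and then average over all sign functions, using that $\tfrac12\sum_{\varepsilon=\pm1}\varepsilon^{m}$ equals $1$ or $0$ according to the parity of $m$, so that only parity-closed walks survive. The paper packages this averaging via a variable adjacency matrix and the operators $f_{a_i}$, whereas you factor the average over $\Pi$ directly, but the underlying argument is identical.
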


\begin{proof}
We denote the edges of $G$ by $e_1, e_2, \ldots, e_m$, where $m=|E|$.
Let $\mathcal{A}=\mathcal{A}(a_1,a_2,\ldots,a_m)$ denote the variable adjacency matrix of $G$ with entries
\begin{align*}
\mathcal{A}_{uv}=\left\{ \begin{array}{l}a_i,~~~~ \mathrm{if} ~\{u,v\}=e_i,\\
0,~~~~~\mathrm{otherwise},
\end{array} \right.
\end{align*}
where $a_i$ is a variable for each $i \in [m]$.
Let $\mathcal{W}_d$ be the set of all closed walks of length $d$ in $G$.
For $w \in \mathcal{W}_d$, let $t_i(w)$ be the number of times edge $e_i$ is used in the closed walk $w$.
The $d$-th order trace of the variable adjacency matrix $\mathcal{A}$ is a homogeneous polynomial of degree $d$ with respect to $a_1,a_2,\ldots,a_m$, i.e.,
\begin{align*}
  \mathrm{trace}(\mathcal{A}^d)=\sum_{w \in \mathcal{W}_d}{\prod_{e_i \in E(w)}{a_i^{t_i(w)}}}.
\end{align*}
If $t_i(w)$ is even for every $a_i \in E(w)$, then the closed walk $w$ is a parity-closed walk in $G$.
In order to account for parity-closed walks only, we will remove all monomials containing variables of odd degree from $\mathrm{trace}(\mathcal{A}^d)$. We then obtain the number of such walks by substituting $a_1=a_2=\cdots = a_m=1$.

Let $f_{a_i}$ be the operation on a polynomial $p$ such that
$$f_{a_i}\circ p(a_1,a_2,\ldots,a_m) = \frac{1}{2}p(a_1,\ldots, a_{i},\ldots,a_m)+\frac{1}{2}p(a_1,\ldots, -a_{i},\ldots,a_m).$$
It follows that $f_{a_i}\circ p$ is the polynomial obtained by removing all terms for which the degree of $a_i$ is odd in  the polynomial $p$. We therefore have to use $m$ such operations, and observe that
\begin{align*}
f_{a_m}\circ f_{a_{m-1}}\circ \cdots \circ f_{a_1}\circ p(a_1,a_2,\ldots,a_m) =
2^{-m}\sum_{\pi \in \Pi}p(\pi(e_1)a_1,\pi(e_2)a_2,\ldots,\pi(e_m)a_m)
\end{align*}

Thus, the number of parity-closed walks of length $d$ equals
\begin{align*}
{P}_d&=\left(f_{a_m}\circ f_{a_{m-1}}\circ \cdots \circ f_{a_1}\circ \mathrm{trace}(\mathcal{A}^d)\right)|_{a_1=a_2=\cdots=a_m=1}\\
  &=2^{-|E|}\sum_{\pi \in \Pi}{\mathrm{trace}(A(G_{\pi})^{d})}
=2^{-|E|}\sum_{\pi \in \Pi}{\mathrm{S}_d(G_{\pi})}.
\end{align*}
\end{proof}

\section{The spectral moments of power hypergraphs via the number of parity-closed walks}\label{michaotupuju}

Let $\mathcal{G}(m)$ be the set of all connected unlabeled subgraphs (so-called motifs) of $G$ with at most $m$ edges.
For ${\widehat{G} \in \mathcal{G}(m)}$, let $N_G(\widehat{G})$ denote the number of subgraphs of $G$ isomorphic to $\widehat{G}$. A closed walk in $\widehat{G}$ is called {\em covering} if it uses each edge at least once.
We use $p_{d}({\widehat{G}})$ to denote the number of covering parity-closed walks of length $d$ in ${\widehat{G}}$.
Define a function
\begin{align}\label{shizi1.1}
{\mathcal{S}}_d(k)=\left\{ \begin{array}{l}(k-1)^{|V|+|E|(k-2)-1}\sum\limits_{\widehat{G} \in \mathcal{G}(\frac{d}{k})}\frac{2^{|E(\widehat{G})|-|V(\widehat{G})|} k^{|E(\widehat{G})|(k-3)+|V(\widehat{G})|}}{(k-1)^{|V(\widehat{G})|+|E(\widehat{G})|(k-2)-1}}p_{\frac{2 d}{k}}(\widehat{G})N_{G}(\widehat{G}),k\mid d,\\
0,{\kern 328pt}k\nmid d,
\end{array} \right.
\end{align}
that depends on $G$, $d$, $k$ and involves parity-closed walks.
By substituting $k=2$ in \eqref{shizi1.1}, it follows that
\begin{align}\label{eq:Pd=S2}
\mathcal{S}_d(2)=\sum_{\widehat{G} \in \mathcal{G}(\frac{d}{2})}p_{d}(\widehat{G})N_{G}(\widehat{G})=P_d,
\end{align}
so the definition $S_d(k)$ extends a natural decomposition of counting parity-closed walks in $G$ to larger $k$.

The goal of this section is to show that ${\mathcal{S}}_d(k)$ is an expression for the spectral moments of the power hypergraph $G^{(k)}$ for $k \geq 3$. In order to accomplish this, we first need to look closer at the Eulerian digraphs $D_f$ and their numbers of spanning trees.

\subsection{Eulerian digraphs}\label{sec:eulerian}

A vertex $v \in V({G}^{(k)}) \setminus V({G})$ is called a {\em core vertex} of $G^{(k)}$.
For $\{i,j\} \in E(G)$, we use $\{i,j\}^{(k)}$ to denote the hyperedge of $G^{(k)}$ formed by adding $k-2$ core vertices to $\{i,j\}$. We denote the set of core vertices in $\{i,j\}^{(k)}$ by $\mathcal{N}_{ij}$.

We provide the following lemma to give the multiplicity of every arc of the multi-digraph $D$ in ${ \mathfrak{D}_{d}(\widehat{G}^{(k)})}$.
This will allow us to find all multi-digraphs ${D \in \mathfrak{D}_{d}(\widehat{G}^{(k)})}$ to compute the spectral moment coefficient $c_d(\widehat{G}^{(k)})$ from Lemma \ref{pujuxishuyinli}.

\begin{lem}\label{chongshuyinli}
Let $k \geq3$.
For ${D \in \mathfrak{D}_{d}(\widehat{G}^{(k)})}$, let $m_D(v,u)$ denote the multiplicity of the arc $(v,u)$ in $E(D)$. Let $\{i,j\} \in E(\widehat{G})$. Then $m_D(v,u)=m_D(v,u')$ for any three distinct vertices $v,u,u'$ in $\{i,j\}^{(k)}$ and
$m_D(i,j)+m_D(j,i)=2m_D(v,i)=2m_D(v,j)$ for every core vertex $v$ of $\{i,j\}^{(k)}$.
\end{lem}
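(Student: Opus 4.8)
The plan is to analyze the structure of the multi-digraph $D = D_f$ coming from a system $f \in \mathcal{F}_d(\widehat{G}^{(k)})$ for which $D_f$ is Eulerian, and to exploit two facts: first, that $D_f$ arises by gluing the arc-sets $\theta(i_j\alpha_j)$ of rooted hyperedges of $\widehat{G}^{(k)}$, and second, that $\widehat{G}^{(k)}$ has a very rigid local structure — every hyperedge $\{i,j\}^{(k)}$ is a clique on $k$ vertices, two of which ($i$ and $j$) may have further incidences in $G$, while the $k-2$ core vertices of $\mathcal{N}_{ij}$ have degree exactly one in $\widehat{G}^{(k)}$. First I would fix a core vertex $v \in \mathcal{N}_{ij}$ and record which rooted hyperedges in $f$ can contribute arcs incident to $v$: since $v$ lies only in the hyperedge $\{i,j\}^{(k)}$, every arc of $\Theta(f)$ touching $v$ comes from a rooted copy of that single hyperedge, rooted either at $v$, at $i$, at $j$, or at another core vertex. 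This already forces a great deal of symmetry among the arcs incident to core vertices.

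Next I would use the Eulerian condition on $D$, i.e. that indegree equals outdegree at every vertex. For a core vertex $v$ of $\{i,j\}^{(k)}$, the out-arcs from $v$ are exactly $\{(v,w) : w \in \{i,j\}^{(k)} \setminus \{v\}\}$ with appropriate multiplicities, one batch contributed for each rooted hyperedge of $f$ that is rooted at $v$; by the clique structure each such rooting contributes one arc to each of the $k-1$ other vertices of the hyperedge, so $m_D(v,u) = m_D(v,u')$ for all $u, u' \in \{i,j\}^{(k)} \setminus \{v\}$ — this gives the first claimed identity. In particular $m_D(v,u)$ depends only on $v$ (not on the target $u$) whenever $v$ is a core vertex; call this common value $r_v$. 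By the same reasoning, if a hyperedge is rooted at $i$ (resp. $j$) it contributes equal arc-multiplicities from $i$ (resp. $j$) into every core vertex of $\mathcal{N}_{ij}$ as well as into $j$ (resp. $i$); hence $m_D(i,v)$ is the same for all $v \in \mathcal{N}_{ij}$ — call it $s_i$ — and likewise $m_D(j,v) = s_j$, and moreover $m_D(i,j) = s_i$, $m_D(j,i) = s_j$.

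Then I would impose balance at the core vertex $v$: its indegree is $m_D(i,v) + m_D(j,v) + \sum_{w \in \mathcal{N}_{ij}\setminus\{v\}} m_D(w,v) = s_i + s_j + \sum_{w} r_w$, and its outdegree is $(k-1) r_v$. Summing these balance equations over all core vertices of the hyperedge and using that a core vertex has no arcs to anything outside $\{i,j\}^{(k)}$, one gets a small linear system forcing all the $r_w$ for $w \in \mathcal{N}_{ij}$ to be equal, say to a common value $r$, and then the single balance equation at a core vertex reads $(k-1)r = s_i + s_j + (k-3)r$, i.e. $2r = s_i + s_j = m_D(i,j) + m_D(j,i)$. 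Combining with $m_D(v,i) = m_D(v,j) = r$ from the first part yields $m_D(i,j) + m_D(j,i) = 2 m_D(v,i) = 2 m_D(v,j)$, which is the second claimed identity. Finally I would note that since $\widehat{G}$ is connected and the argument is local to each hyperedge $\{i,j\}^{(k)}$, nothing more is needed; the statement holds edge by edge.

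The main obstacle I anticipate is bookkeeping the rooted-hyperedge contributions cleanly: one must be careful that $\Theta(f)$ is a \emph{multi}-set and that a given hyperedge of $\widehat{G}$ may be used several times in $f$ (possibly rooted at different vertices each time), so the multiplicities $r_v, s_i, s_j$ are sums over all those occurrences. Keeping track of the fact that the only arcs a core vertex participates in — in either direction — live inside its one hyperedge is what makes the balance equations decouple hyperedge by hyperedge; verifying this decoupling rigorously, rather than the linear algebra that follows it, is where the care is needed.
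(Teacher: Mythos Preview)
Your proposal is correct and follows essentially the same route as the paper: the first identity comes straight from the construction of $D_f$ (each rooted copy of $\{i,j\}^{(k)}$ contributes one out-arc from its root to every other vertex of the hyperedge), and the second from the Eulerian balance at each core vertex together with the fact that a core vertex lies in a single hyperedge. One minor quibble: to conclude that all the $r_w$ coincide you should \emph{compare} (subtract) the balance equations rather than \emph{sum} them --- equivalently, as the paper does, rewrite the balance at $v$ as $k\,m_D(v,i) = m_D(i,j) + m_D(j,i) + \sum_{u \in \mathcal{N}_{ij}} m_D(u,i)$ and note that the right-hand side is independent of $v$.
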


\begin{proof}
From the construction of $D_f \cong D$, it follows that $m_D(v,u)=m_D(v,u')$ for any three distinct vertices $v,u,u'$ in $\{i,j\}^{(k)}$.

Because a core vertex $v \in \mathcal{N}_{ij}$ occurs in only one hyperedge, if follows that $\mathrm{deg}_D^+(v)=\sum_{u \in \{i,j\}^{(k)}\setminus \{v\}}{m_D(v,u)}=(k-1)m_D(v,i)$ and
\begin{align*}
\mathrm{deg}_D^-(v)&=\sum_{u \in \{i,j\}^{(k)}\setminus \{v\}}{m_D(u,v)}\\
&=m_D(i,v)+m_D(j,v)+\sum_{u \in \mathcal{N}_{ij}\setminus \{v\}}{m_D(u,v)}\\
&=m_D(i,j)+m_D(j,i)+\sum_{u \in \mathcal{N}_{ij}\setminus \{v\}}{m_D(u,i)}.
\end{align*}
Since $D \in \mathfrak{D}_d(\widehat{G}^{(k)})$ is Eulerian, we have $\mathrm{deg}_D^-(v)=\mathrm{deg}_D^+(v)$.
It yields that
\begin{align}\label{shizi1}
km_D(v,i)=m_D(i,j)+m_D(j,i)+\sum_{u \in \mathcal{N}_{ij}}{m_D(u,i)}
\end{align}
for all  $v \in \mathcal{N}_{ij}$.
It follows that $m_D(v,i)=m_D(u,i)$ for any $v, u \in \mathcal{N}_{ij}$.
By \eqref{shizi1}, we have $m_D(i,j)+m_D(j,i)=2m_D(v,i)=2m_D(v,j)$ for all $v \in \mathcal{N}_{ij}$.
\end{proof}

For ${D \in \mathfrak{D}_{d}(\widehat{G}^{(k)})}$,
let $D^*$ be the multi-digraph obtained by removing all core vertices from $D$ and let
$\mathfrak{D}^*=\mathfrak{D}^*_d(\widehat{G}^{(k)})=\{D^*:D \in \mathfrak{D}_{d}(\widehat{G}^{(k)})\}$. It follows from Lemma \ref{chongshuyinli} that there is a one-one correspondence between $\mathfrak{D}_d(\widehat{G}^{(k)})$ and $\mathfrak{D}^*_d(\widehat{G}^{(k)})$, i.e., $D$ can be reconstructed from $D^*$ using the derived equations in the statement of the lemma.

\begin{lem}\label{chongshuyinliEulerian}
Let $k \geq3$
and ${D \in \mathfrak{D}_{d}(\widehat{G}^{(k)})}$.
Then $D^*$ is Eulerian.
\end{lem}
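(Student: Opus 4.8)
The plan is to show that $D^*$ is Eulerian by checking the two defining conditions: that $D^*$ is connected (on its set of non-isolated vertices), and that every vertex of $D^*$ has equal in- and out-degree. Connectivity is inherited easily: since $D \in \mathfrak{D}_{d}(\widehat{G}^{(k)})$ is Eulerian, it is connected, and $\widehat{G}$ is connected by assumption; moreover every core vertex of $\{i,j\}^{(k)}$ is, in $D$, joined only to vertices of that single hyperedge, and Lemma \ref{chongshuyinli} guarantees $m_D(v,i)=m_D(v,j)>0$ whenever $m_D(i,j)+m_D(j,i)>0$. So contracting away the core vertices cannot disconnect anything: any walk in $D$ that passed through a core vertex of $\{i,j\}^{(k)}$ entered and left through $i$ and/or $j$, so the corresponding edge $\{i,j\}$ carries positive multiplicity in $D^*$, and the induced subgraph $\widehat{G}[E(D^*)]$ stays connected. (One should note the edge case $k=3$ where $\widehat{G}$ ranges over induced subgraphs, but this does not affect the argument.)

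The degree balance is the substantive point, and it is exactly what Lemma \ref{chongshuyinli} was set up to deliver. Fix a vertex $i \in V(\widehat{G})$. In $D^*$ the only arcs incident to $i$ are the arcs $(i,j)$ and $(j,i)$ for neighbours $j$ of $i$ in $\widehat{G}$, so
\begin{align*}
\mathrm{deg}_{D^*}^+(i) = \sum_{j \sim i} m_{D^*}(i,j) = \sum_{j \sim i} m_{D}(i,j), \qquad
\mathrm{deg}_{D^*}^-(i) = \sum_{j \sim i} m_{D}(j,i).
\end{align*}
In $D$ itself, the balance condition at $i$ reads $\mathrm{deg}_D^+(i)=\mathrm{deg}_D^-(i)$, where now the sums run over all vertices $u$ in hyperedges containing $i$, including the core vertices. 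So the plan is to subtract off the core-vertex contributions and show the remainders (which are precisely the $D^*$-degrees) are still equal. For each edge $\{i,j\}\in E(\widehat{G})$, the arcs from $i$ into $\mathcal{N}_{ij}$ contribute $\sum_{v\in\mathcal{N}_{ij}} m_D(i,v) = (k-2)\,m_D(i,v)$ to $\mathrm{deg}_D^+(i)$ (all these multiplicities are equal by Lemma \ref{chongshuyinli}), and likewise the arcs from $\mathcal{N}_{ij}$ into $i$ contribute $(k-2)\,m_D(v,i)$ to $\mathrm{deg}_D^-(i)$. By Lemma \ref{chongshuyinli} applied with the roles of $u,u'$ being $i$ and a core vertex, $m_D(i,v)=m_D(i,v')$ and $m_D(v,i)=m_D(v',i)$, and one checks that in fact $m_D(i,v)=m_D(v,i)$ for each core $v\in\mathcal{N}_{ij}$: indeed $\mathrm{deg}_D^+(v)=(k-1)m_D(v,i)$ and the displayed computation of $\mathrm{deg}_D^-(v)$ in the proof of Lemma \ref{chongshuyinli} together with the already-derived relation $km_D(v,i)=m_D(i,j)+m_D(j,i)+(k-2)m_D(v,i)$ force $m_D(i,v)$ and $m_D(v,i)$ to agree (both equal $\tfrac12(m_D(i,j)+m_D(j,i))$ — the same quantity that Lemma \ref{chongshuyinli} equates with $m_D(v,i)$, once one sorts out that the incoming core-arc multiplicity at $i$ also equals $m_D(v,i)$). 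Hence the two core-contributions to $\mathrm{deg}_D^+(i)$ and $\mathrm{deg}_D^-(i)$ cancel edge by edge, leaving $\mathrm{deg}_{D^*}^+(i)=\mathrm{deg}_{D^*}^-(i)$.

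The main obstacle, and the one step I would be careful to write out in full, is the bookkeeping in the previous paragraph: Lemma \ref{chongshuyinli} as stated gives equalities among the \emph{out}-arc multiplicities $m_D(v,\cdot)$ from a core vertex, and the relation $m_D(i,j)+m_D(j,i)=2m_D(v,i)$, but to cancel degrees at $i$ I need to know the multiplicity of the arc $(i,v)$ \emph{into} a core vertex, i.e.\ with $i$ as the tail. This requires re-running the in-degree/out-degree balance at $i$ (not at $v$) and using that $i$'s only non-core in-neighbours and out-neighbours within $\{i,j\}^{(k)}$ are $j$; so $\sum_{v\in\mathcal{N}_{ij}}m_D(i,v)$ is pinned down by $\mathrm{deg}_D^+(i)$ restricted to that hyperedge, and similarly for the in-side. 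Once it is established that $m_D(i,v)=m_D(v,i)\;(=\tfrac12(m_D(i,j)+m_D(j,i)))$ for every core $v$ in $\{i,j\}^{(k)}$, the cancellation is immediate and the proof is complete. I would present this as a short lemma-internal computation rather than invoking Lemma \ref{chongshuyinli} as a black box, since the needed identity is a corollary of its proof rather than of its statement.
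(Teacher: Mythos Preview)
Your overall strategy---use the Eulerian balance at a non-core vertex $i$ in $D$ and strip off the core contributions to recover balance in $D^*$---is exactly the paper's approach. But the execution has a genuine error: the claim that $m_D(i,v)=m_D(v,i)$ for a core vertex $v\in\mathcal{N}_{ij}$ is false in general, so the ``edge by edge'' cancellation does not work.

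From the construction of $D_f$ (arcs go from the root of a hyperedge to all other vertices of that hyperedge), the first part of Lemma~\ref{chongshuyinli} gives $m_D(i,v)=m_D(i,j)$ for every core $v\in\mathcal{N}_{ij}$, while the second part gives $m_D(v,i)=\tfrac12\bigl(m_D(i,j)+m_D(j,i)\bigr)$. These coincide only when $m_D(i,j)=m_D(j,i)$, which is not forced. Concretely, take $k=3$, $\widehat{G}=K_3$, and let $D^*$ be the directed triangle traversed twice (so $m_{D^*}(1,2)=2$, $m_{D^*}(2,1)=0$, etc.); the corresponding $D\in\mathfrak{D}_d(\widehat{G}^{(3)})$ has $m_D(1,v_{12})=2$ but $m_D(v_{12},1)=1$. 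The balance at $v$ that you invoke collapses to exactly the identity $2m_D(v,i)=m_D(i,j)+m_D(j,i)$ already in Lemma~\ref{chongshuyinli}; it does not produce the extra equality you need. Likewise, ``$\deg_D^+(i)$ restricted to one hyperedge'' is not a quantity the Eulerian condition at $i$ controls when $i$ lies in several hyperedges.

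The fix is to abandon hyperedge-by-hyperedge cancellation and sum globally, as the paper does: using $m_D(i,v)=m_D(i,j)$ and $m_D(v,i)=\tfrac12(m_D(i,j)+m_D(j,i))$ one gets
\[
\deg_D^+(i)=\sum_{j\sim i}(k-1)m_D(i,j),\qquad
\deg_D^-(i)=\sum_{j\sim i}\Bigl(m_D(j,i)+(k-2)\tfrac{m_D(i,j)+m_D(j,i)}{2}\Bigr),
\]
and setting these equal yields $\tfrac{k}{2}\sum_{j\sim i}m_D(i,j)=\tfrac{k}{2}\sum_{j\sim i}m_D(j,i)$, i.e.\ $\deg_{D^*}^+(i)=\deg_{D^*}^-(i)$. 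The cancellation happens only after summing over all neighbours of $i$.
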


\begin{proof}
We use Lemma \ref{chongshuyinli}. For any $i \in V(\widehat{G})$, we have that $$\mathrm{deg}_D^+(i)=\sum_{j:\{i,j\}\in E(\widehat{G})}{(k-1)m_D(i,j)}$$ and
\begin{align*}
\mathrm{deg}_D^-(i)&=\sum_{j:\{i,j\}\in E(\widehat{G})}{\left(m_D(j,i)+\sum_{v\in \mathcal{N}_{ij}}m_D(v,i)\right)}\\
&=\sum_{j:\{i,j\}\in E(\widehat{G})}{\left(m_D(j,i)+(k-2)\frac{m_D(i,j)+m_D(j,i)}{2}\right).}
\end{align*}
Since $D$ is Eulerian, we have $\mathrm{deg}_D^+(i)=\mathrm{deg}_D^-(i)$,
which implies that
$$\sum_{j:\{i,j\}\in E(\widehat{G})}{m_D(i,j)}=\sum_{j:\{i,j\}\in E(\widehat{G})}{m_D(j,i)},$$ i.e., $\mathrm{deg}_{D^*}^+(i)=\mathrm{deg}_{D^*}^-(i)$ for $i \in V(\widehat{G})=V(D^*)$.
Thus, $D^*$ is Eulerian.
\end{proof}

Using Lemma \ref{chongshuyinli}, we can now give an intuitive description of the set $\mathcal{G}_d$ from \eqref{shizi5}.
Recall that $\mathcal{G}(m)$ is the set of all connected unlabeled subgraphs (motifs) of $G=(V,E)$ with at most $m$ edges.

\begin{lem}\label{jihedingli}
Let $k \geq 3$. Then $\mathcal{G}_d=\mathcal{G}(\frac{d}{k})$ and $|E(D^*)|=\frac{2d}{k}$ for $D \in \mathfrak{D}_{d}(\widehat{G}^{(k)})$.
\end{lem}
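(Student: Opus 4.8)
The plan is to derive both claims from Lemma~\ref{chongshuyinli} together with the elementary count $|E(D_f)|=d(k-1)$ of arcs, and to settle the harder inclusion $\mathcal{G}(\tfrac{d}{k})\subseteq\mathcal{G}_d$ by an explicit construction of a suitable $f$.

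First I would prove $|E(D^*)|=\tfrac{2d}{k}$ for an arbitrary $D\in\mathfrak{D}_d(\widehat{G}^{(k)})$. Write $D\cong D_f$ with $f=(i_1\alpha_1,\ldots,i_d\alpha_d)$; counting arcs with multiplicity, $|E(D)|=|\Theta(f)|=\sum_{j=1}^{d}|\theta(i_j\alpha_j)|=d(k-1)$. Put $s_{ij}=m_D(i,j)+m_D(j,i)$ for $\{i,j\}\in E(\widehat{G})$, so $|E(D^*)|=\sum_{\{i,j\}\in E(\widehat{G})}s_{ij}$, and compute $|E(D)|=\sum_{v}\mathrm{deg}_D^+(v)$ by splitting $V(\widehat{G}^{(k)})$ into the vertices of $\widehat{G}$ and the core vertices. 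As in the proof of Lemma~\ref{chongshuyinliEulerian}, $\mathrm{deg}_D^+(i)=(k-1)\mathrm{deg}_{D^*}^+(i)$ for $i\in V(\widehat{G})$, so these vertices contribute $(k-1)|E(D^*)|$; and for a core vertex $v\in\mathcal{N}_{ij}$, Lemma~\ref{chongshuyinli} gives $\mathrm{deg}_D^+(v)=(k-1)m_D(v,i)=\tfrac{k-1}{2}s_{ij}$, so the core vertices contribute $\tfrac{(k-1)(k-2)}{2}\sum_{\{i,j\}}s_{ij}=\tfrac{(k-1)(k-2)}{2}|E(D^*)|$. Summing, $d(k-1)=\tfrac{k(k-1)}{2}|E(D^*)|$, i.e.\ $|E(D^*)|=\tfrac{2d}{k}$. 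Since each $s_{ij}=2m_D(v,i)$ is even (a core vertex exists as $k\ge3$), $|E(D^*)|$ is an even integer, whence $k\mid d$; in particular $\mathfrak{D}_d(\widehat{G}^{(k)})=\emptyset$ when $k\nmid d$, so I may assume $k\mid d$ in what follows.

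For $\mathcal{G}_d\subseteq\mathcal{G}(\tfrac{d}{k})$: take $\widehat{G}\in\mathcal{G}_d$ and $D\cong D_f\in\mathfrak{D}_d(\widehat{G}^{(k)})$. Being Eulerian, $D$ has all its arcs in one weakly connected component, and $H_f\cong\widehat{G}^{(k)}$ has vertex set $\bigcup_j e_j$ and hence no isolated vertex, so $\widehat{G}^{(k)}$ is connected with no isolated vertex and therefore so is $\widehat{G}$. For each $\{i,j\}\in E(\widehat{G})$ the hyperedge $\{i,j\}^{(k)}$ appears among the rooted hyperedges of $f$; inspecting the $k-1$ arcs of that occurrence (whichever vertex of $\{i,j\}^{(k)}$ is the root) and using Lemma~\ref{chongshuyinli} forces $s_{ij}\ge1$, hence $s_{ij}\ge2$ since $s_{ij}$ is even. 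Thus $2|E(\widehat{G})|\le\sum_{\{i,j\}}s_{ij}=|E(D^*)|=\tfrac{2d}{k}$, so $\widehat{G}$ is a connected subgraph of $G$ with at most $\tfrac{d}{k}$ edges, i.e.\ $\widehat{G}\in\mathcal{G}(\tfrac{d}{k})$. For the reverse inclusion, let $\widehat{G}$ be a connected subgraph of $G$ with $t:=|E(\widehat{G})|\le\tfrac{d}{k}=:T$ (the case $t=0$ is degenerate, so assume $t\ge1$). I would build $f$ as follows: for every edge $e\in E(\widehat{G})$, take the $k$ rooted hyperedges obtained by rooting the hyperedge $e^{(k)}$ at each of its $k$ vertices; then append $T-t$ further such batches of $k$ rootings, all of one fixed edge $e_0\in E(\widehat{G})$. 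This $f$ has $kt+k(T-t)=kT=d$ rooted hyperedges, each $i_j\alpha_j$ arises from a hyperedge of $G^{(k)}$ (so $\pi_f(A_{G^{(k)}})\neq0$), and every hyperedge of $\widehat{G}^{(k)}$ is used, so $H_f\cong\widehat{G}^{(k)}$. Each batch contributes, inside the single hyperedge $e^{(k)}$, one arc in each direction between every two of its vertices, which is degree-balanced at every vertex of $e^{(k)}$; summing over batches, $D_f$ has equal in- and out-degree everywhere, and it is weakly connected because $\widehat{G}$ is connected and each batch connects all vertices of its hyperedge. Hence $D_f$ has an Eulerian circuit, $\mathfrak{D}_d(\widehat{G}^{(k)})\neq\emptyset$, and $\widehat{G}\in\mathcal{G}_d$; together with the previous inclusion this gives $\mathcal{G}_d=\mathcal{G}(\tfrac{d}{k})$.

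The degree bookkeeping in the first step and the balancedness/connectivity check for $D_f$ are routine. The step that needs the most care is the construction for $\mathcal{G}(\tfrac{d}{k})\subseteq\mathcal{G}_d$: one must choose the rooted hyperedges so that $D_f$ is genuinely Eulerian, not merely degree-balanced, and so that their number is exactly $d=kT$ — which is why the padding by extra copies of $e_0^{(k)}$ is included — and one should also dispose of the degenerate edgeless-motif case and of the convention for $\mathcal{G}(\tfrac{d}{k})$ when $k\nmid d$ (for which $\mathfrak{D}_d(\widehat{G}^{(k)})$ is empty, as shown in the first step).
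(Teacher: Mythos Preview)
Your proof is correct. The arc-count $|E(D^*)|=2d/k$ and the inclusion $\mathcal{G}_d\subseteq\mathcal{G}(d/k)$ proceed essentially as in the paper: both derive $\sum_{\{i,j\}}\tfrac{m_D(i,j)+m_D(j,i)}{2}=d/k$ from $|E(D)|=d(k-1)$ and Lemma~\ref{chongshuyinli}, you just organise the sum by vertices rather than by edges.

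The genuine difference is in the reverse inclusion. The paper starts from a covering parity-closed walk $w$ of length $2d/k$ in $\widehat{G}$, lets $D_w$ be the Eulerian digraph traced out by $w$, and then inflates $D_w$ to a $D\in\mathfrak{D}_d(\widehat{G}^{(k)})$ by inserting core vertices with the multiplicities dictated by Lemma~\ref{chongshuyinli}. Your construction instead bypasses walks entirely: one ``batch'' of the $k$ rootings of a single hyperedge $e^{(k)}$ yields a complete bidirected digraph on its $k$ vertices, which is trivially balanced and connected, and padding with extra batches of a fixed $e_0$ reaches exactly $d$ rooted hyperedges. This is more elementary and makes the Eulerian property immediate. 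The paper's route, however, is not just proving this lemma: it simultaneously sets up the bijection between covering parity-closed walks in $\widehat{G}$ and Eulerian walks in the various $D^*\in\mathfrak{D}^*$, which is exactly what is exploited in Lemma~\ref{lem:parityclosedeulerian}. So your argument is cleaner for the lemma in isolation, while the paper's construction does double duty for what comes next.
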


\begin{proof}

We will first prove that $\mathcal{G}_d \subseteq \mathcal{G}(\frac{d}{k})$.

For any $\widehat{G} \in \mathcal{G}_d$ and $D \in \mathfrak{D}_d(\widehat{G}^{(k)})$, first note that $|E(D)|=d(k-1)$.
On the other hand, using Lemma \ref{chongshuyinli}, we have that
\begin{align*}
&|E(D)|
=\sum_{\{i,j\} \in E(\widehat{G})}{\sum_{u,v \in \{i,j\}^{(k)}\atop  u \neq v}{m_D(u,v)}}\notag \\
&=\sum_{\{i,j\} \in E(\widehat{G})}\left({\sum_{ v \in \{i,j\}^{(k)}\setminus \{i\} }m_D(i,v)}+{\sum_{  v \in \{i,j\}^{(k)}\setminus \{j\}}m_D(j,v)}+{\sum_{u \in \N_{ij}}\sum_{ v \in \{i,j\}^{(k)}\setminus \{u\}}m_D(u,v)}\right) \notag\\
&=\sum_{\{i,j\} \in E(\widehat{G})}\left({(k-1)m_D(i,j)}+{(k-1)m_D(j,i)}+{\sum_{u \in \N_{ij}}\sum_{v \in \{i,j\}^{(k)}\setminus \{u\}}m_D(u,v)}\right).
\end{align*}
Because $m_D(u,v)=\frac{m_D(i,j)+m_D(j,i)}{2}$ for any $u \in \N_{ij}$ and any ${v \in \{i,j\}^{(k)}} \setminus \{u\}$, it now readily follows that
$$|E(D)|=k(k-1)\sum_{\{i,j\} \in E(\widehat{G})}{\frac{m_D(i,j)+m_D(j,i)}{2}}.$$
Since $|E(D)|=d(k-1)$, we thus have that
\begin{align}\label{countedges}
\sum_{\{i,j\} \in E(\widehat{G})}{\frac{m_D(i,j)+m_D(j,i)}{2}}= \frac{d}{k}.
\end{align}

From Lemma \ref{chongshuyinli}, we know that ${\frac{m_D(i,j)+m_D(j,i)}{2}}$ is a positive integer for any $\{i,j\} \in E(\widehat{G})$.
Hence we obtain that $|E(\widehat{G})| \leq \frac{d}{k}$, i.e, $\widehat{G} \in \mathcal{G}(\frac{d}{k})$.

From \eqref{countedges} it also follows that
\begin{align*}|E(D^*)|&=\sum_{i \in V(\widehat{G})}\deg_{D^*}(i)=\sum_{i \in V(\widehat{G})} \sum_{j:\{i,j\} \in E(\widehat{G})} m_D(i,j)\\
 &= \sum_{\{i,j\} \in E(\widehat{G})}{m_D(i,j)+m_D(j,i)}=\frac{2d}{k}.
 \end{align*}

Finally, we will show that $\mathcal{G}_d  \supseteq \mathcal{G}(\frac{d}{k})$.
For any  $\widehat{G} \in \mathcal{G}(\frac{d}{k})$, there exists a closed walk $w$ in $\widehat{G}$ with length $\frac{2d}{k}$ such that all edges of $\widehat{G}$ are used an even number of times.
We first construct the multi-digraph $D_{w}=(V(\widehat{G}),E(w))$ without core vertices from the closed walk $w$. Then $D_{w}$ is clearly Eulerian.
Recall that $m_{D_{w}}(i,j)$ denotes the multiplicity of the arc $(i,j)$ in $E(w)$, and note that $\frac{m_{D_{w}}(i,j)+m_{D_{w}}(j,i)}{2}$ is a positive integer.
Next, we will add core vertices  and their incident arcs to $D_{w}$ in order to obtain a multi-digraph $D$.
For any $\{i,j\} \in E(\widehat{G})$ and any core vertex $v \in \N_{ij}$,
we construct $D$ such that $m_D(v,u)=\frac{m_{D_{w}}(i,j)+m_{D_{w}}(j,i)}{2}$ for any $u \in \{i,j\}^{(k)} \setminus \{v\}$ and $m_D(i,v)=m_D(i,j)=m_{D_{w}}(i,j)$.
It is easy to see that $D$ is Eulerian and $D \in\mathfrak{D}_d(\widehat{G}^{(k)})$, i.e, $\mathfrak{D}_d(\widehat{G}^{(k)})$ is not empty, since $D_{w}$ is Eulerian.
Thus, $\widehat{G} \in \mathcal{G}_d$.
\end{proof}

An additional consequence of the above proof is that every covering parity-closed walk $w$ gives rise to a Eulerian walk in some $D^{*}=D_w \in \mathfrak{D}^*_d(\widehat{G}^{(k)})=\mathfrak{D}^*$. This leads to the following result.

\begin{lem}\label{lem:parityclosedeulerian}
Let $k \geq 3$ and $\widehat{G} \in \mathcal{G}(\frac{d}{k})$. Then
$$p_{2d/k}(\widehat{G})=\sum_{D^* \in \mathfrak{D}^* }\frac{|E(D^*)|}{b(D^*)}\prod_{v \in V(D^*)}(\mathrm{deg}_{D^*}^+(v)-1)!t(D^*).$$
\end{lem}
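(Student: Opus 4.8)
The plan is to set up a correspondence between covering parity-closed walks of length $2d/k$ in $\widehat{G}$ and Eulerian walks in the multi-digraphs $D^{*}\in\mathfrak{D}^{*}$, so that the left-hand side splits as a sum over $D^{*}\in\mathfrak{D}^{*}$ of the numbers of Eulerian walks in $D^{*}$, and then to evaluate each of those numbers with the BEST Theorem (Lemma \ref{biwalkgongshi}).

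First I would make the correspondence precise in both directions. Given a covering parity-closed walk $w$ of length $2d/k$ in $\widehat{G}$, form the multi-digraph $D_{w}=(V(\widehat{G}),E(w))$ that records the directed edge-traversals of $w$, exactly as in the proof of Lemma \ref{jihedingli}; then $w$ is an Eulerian walk in $D_{w}$, the digraph $D_{w}$ is Eulerian since $w$ is closed, $|E(D_{w})|$ equals the length $2d/k$ of $w$, and $D_{w}\in\mathfrak{D}^{*}$ by the consequence of the proof of Lemma \ref{jihedingli} recorded above. Conversely, fix $D^{*}\in\mathfrak{D}^{*}$. By Lemma \ref{chongshuyinliEulerian} it is Eulerian, its underlying simple graph is $\widehat{G}$, and $|E(D^{*})|=2d/k$ by Lemma \ref{jihedingli}. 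Any Eulerian walk $w$ in $D^{*}$ is thus a closed walk of length $2d/k$ in $\widehat{G}$ using each edge $\{i,j\}$ of $\widehat{G}$ exactly $m_{D^{*}}(i,j)+m_{D^{*}}(j,i)$ times; by Lemma \ref{chongshuyinli} this number is positive (so $w$ is covering) and even (so $w$ is parity-closed), and evidently $D_{w}=D^{*}$. Hence the covering parity-closed walks of length $2d/k$ in $\widehat{G}$ are partitioned according to the multi-digraph $D_{w}\in\mathfrak{D}^{*}$ they induce, the block of $D^{*}$ being precisely the set of its Eulerian walks, so that
\begin{align*}
p_{2d/k}(\widehat{G})=\sum_{D^{*}\in\mathfrak{D}^{*}}\bigl(\text{number of Eulerian walks in }D^{*}\bigr).
\end{align*}
Applying Lemma \ref{biwalkgongshi} to each $D^{*}$ --- with $b(D^{*})$ the quantity $b(D)$ appearing there, and with the outdegrees and the spanning-tree count $t(D^{*})$ transferring verbatim --- turns each summand into $\frac{|E(D^{*})|}{b(D^{*})}\prod_{v\in V(D^{*})}(\mathrm{deg}_{D^{*}}^{+}(v)-1)!\,t(D^{*})$, which is the claimed formula.

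The step I expect to be the real obstacle is the identification that the multi-digraphs induced by covering parity-closed walks of length $2d/k$ in $\widehat{G}$ are exactly the members of $\mathfrak{D}^{*}$, together with the parameter bookkeeping $|E(D^{*})|=2d/k$. This is exactly where the structural results already in hand are used: Lemma \ref{chongshuyinli} both forces the even edge-multiplicities that make Eulerian walks of $D^{*}$ parity-closed and provides the dictionary between $D$ and $D^{*}$, Lemma \ref{chongshuyinliEulerian} ensures each $D^{*}$ is Eulerian, and Lemma \ref{jihedingli} pins down the length. Once this correspondence is established, the remainder is a direct substitution into the BEST Theorem with no substantive computation.
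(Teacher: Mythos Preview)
Your proposal is correct and follows essentially the same approach as the paper's own proof: establish the bijection between covering parity-closed walks of length $2d/k$ in $\widehat{G}$ and Eulerian walks in the digraphs $D^{*}\in\mathfrak{D}^{*}$ (using Lemma \ref{chongshuyinli} for the parity condition, Lemma \ref{chongshuyinliEulerian} for the Eulerian property, and Lemma \ref{jihedingli} for the arc count and the forward direction), and then apply the BEST Theorem (Lemma \ref{biwalkgongshi}). The paper's version is terser but invokes exactly the same ingredients in the same order.
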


\begin{proof} As remarked, every covering parity-closed walk $w$ gives rise to a Eulerian walk in some $D^{*}$. On the other hand, consider a Eulerian walk in some $D^* \in \mathfrak{D}^*$. First, note that the walk gives rise to a covering closed walk in the underlying graph $\widehat{G}$. Next, let $D$ be the corresponding multi-digraph in $\mathfrak{D}_d(\widehat{G}^{(k)})$. Then $m_D(i,j)=m_{D^*}(i,j)$ for any two non-core vertices $i,j$. By Lemma \ref{chongshuyinli}, we have $m_D(i,j)+m_D(j,i)=2m_D(v,i)$ for every core vertex $v$ of $\{i,j\}^{(k)}$, and hence $m_{D^*}(i,j)+m_{D^*}(j,i)$ is even. This implies that every edge in the corresponding walk is covered an even number of times, and hence the (covering) walk is parity-closed. The result now follows by Lemma
\ref{biwalkgongshi} (the ``BEST theorem").
\end{proof}

\subsection{The number of spanning trees}\label{sec:spanningtrees}

In order to obtain the spectral moments of $G^{(k)}$ from \eqref{shizi5}, we need to further reduce the spectral moment coefficients $c_d(\widehat{G}^{(k)})$, and hence we have to reduce the number of spanning trees $t(D)$ from Lemma \ref{pujuxishuyinli}.
By the Matrix-Tree theorem \cite{duval2009simplicial} and an expression for the determinant involving the Schur complement \cite{brualdi1983determinantal}, the number of spanning trees of a larger graph can be reduced to the number of spanning trees of a smaller weighted graph \cite{crabtree1966applications,fan1960note,griffing2014structural,zhou2021enumeration}.
Using a similar trick, we reduce the number of spanning trees $t(D)$ of the digraph $D$ to $t(D^*)$ as follows.

\begin{lem}\label{scsyinli}
Let $k \geq3$ and ${D \in \mathfrak{D}_{d}(\widehat{G}^{(k)})}$. Then
\begin{align}\label{scsshizi}
t(D)=t(D^*)k^{|E(\widehat{G})|(k-3)+|V(\widehat{G})|-1}2^{|E(\widehat{G})|-V(\widehat{G})+1}\prod_{\{i,j\} \in E(\widehat{G})}{\left(\frac{m_{D^*}(i,j)+m_{D^*}(j,i)}{2}\right)}^{k-2}.
\end{align}
\end{lem}

\begin{proof}
We write the Laplacian matrix of $D$ as a block matrix
$${L_D} =
\begin{bmatrix} M&N\\
P&Q
\end{bmatrix}.$$
The matrix $M$ is a $|V(\widehat{G})| \times |V(\widehat{G})|$ matrix, with $M_{ii}=\mathrm{deg}_D^+(i)$ and $M_{ij}=-m_D(i,j)$ for $i \in V(\widehat{G})$.

Denote the edges of $G$ by $e_1,e_2,\ldots,e_{|E(\widehat{G})|}$.
The matrix $Q$ is a block diagonal matrix with diagonal blocks $Q_{mm}=\frac{m_D(i,j)+m_D(j,i)}{2}(kI-J)$, where $\{i,j\}=e_m$, $I$ is an identity matrix and $J$ is an all-ones matrix of size $(k-2) \times (k-2)$.

The matrix $N$ is a block matrix with $|V(\widehat{G})| \times |E(\widehat{G})|$ blocks as follows.
Let $\mathbf{1}$ be the all-ones column vector of size $(k-2)$.
Then $N_{im}=-m_D(i,j)\mathbf{1}^{\top}$ if $e_m=\{i,j\}$ and $N_{im}=0$ if $i \notin e_m$.
Similarly, the matrix $P$ is a block matrix with blocks $P_{mi}=-\frac{m_D(i,j)+m_D(j,i)}{2}\mathbf{1}$ if $e_m=\{i,j\}$ and $P_{mi}=0$ if $i \notin e_m$.

Let $\widehat{L_D}$ denote the submatrix of $L_D$ obtained by deleting the first row and column, then we can write it as block matrix
$$\widehat{L_D} =
\begin{bmatrix} \widehat{M}&\widehat{N}\\
\widehat{P}&Q
\end{bmatrix}.$$

From the Matrix-tree Theorem \cite{duval2009simplicial}, we have $t(D)=\mathrm{det}(\widehat{L_D})$.
Note that the matrix $Q$ is invertible and $(Q^{-1})_{mm}=(Q_{mm})^{-1}=k^{-1}(m_D(i,j)+m_D(j,i))^{-1}(2I+J)$, where $\{i,j\}=e_m$.
From the determinant formula involving the Schur complement \cite{brualdi1983determinantal}, we have
\begin{align}\label{shizi3}
t(D)=\mathrm{det}(\widehat{L_D})=\mathrm{det}(Q)\mathrm{det}(\widehat{M}-\widehat{N}Q^{-1}\widehat{P}).
\end{align}
Furthermore,
\begin{align*}
{\left( {\widehat{M} - \widehat{N}{Q^{ - 1}}\widehat{P}} \right)_{ii}} &= {\widehat{M}_{ii}} - \sum\limits_{m = 1}^{\left| {E(\widehat{G})} \right|} {{\widehat{N}_{im}}{{\left( {{Q^{ - 1}}} \right)}_{mm}}{\widehat{P}_{mi}}} \\
&=\mathrm{deg}_D^+(i)-\sum_{j:\{i,j\} \in E(\widehat{G})}{(2k)^{-1}m_D(i,j)\mathbf{1}^\top (2I+J)\mathbf{1}}\\
&=\mathrm{deg}_D^+(i)-\frac{k-2}{2}\sum_{j:\{i,j\} \in E(\widehat{G})}{m_D(i,j)}\\
&=(k-1)\mathrm{deg}_{D^*}^+(i)-\frac{k-2}{2}\mathrm{deg}_{D^*}^+(i)
=\frac{k}{2}\mathrm{deg}_{D^*}^+(i)
\end{align*}
and
\begin{align*}
{\left( {\widehat{M} - \widehat{N}{Q^{ - 1}}\widehat{P}} \right)_{ij}} &= {\widehat{M}_{ij}} - \sum\limits_{m = 1}^{\left| {E(\widehat{G})} \right|} {{\widehat{N}_{im}}{{\left( {{Q^{ - 1}}} \right)}_{mm}}{\widehat{P}_{mj}}} \\
&=-m_D(i,j)-\frac{k-2}{2}m_D(i,j)
=-\frac{k}{2}m_{D^*}(i,j).
\end{align*}
Thus, for the Schur complement, we have ${\widehat{M} - \widehat{N}{Q^{ - 1}}\widehat{P}}=\frac{k}{2}\widehat{L_{D^*}}$, where $\widehat{L_{D^*}}$ is obtained from the Laplacian matrix of $D^*$ by removing the same row and column as in $M$.
Hence $\mathrm{det}(\widehat{M}-\widehat{N}Q^{-1}\widehat{P})=\mathrm{det}(\frac{k}{2}\widehat{L_{D^*}})=(\frac{k}{2})^{|V(\widehat{G})|-1}t(D^*)$.

Note also that if $e_m=\{i,j\}$, then
\begin{align*}
 \mathrm{det}(Q_{mm})&={\left(\frac{m_{D^*}(i,j)+m_{D^*}(j,i)}{2}\right)}^{k-2}\mathrm{det}(kI-J)\\
&=2k^{k-3}{\left(\frac{m_{D^*}(i,j)+m_{D^*}(j,i)}{2}\right)}^{k-2}.
\end{align*}
By \eqref{shizi3}, we thus have that
\begin{align*}
t(D)&=(\frac{k}{2})^{|V(\widehat{G})|-1}t(D^*)\mathrm{det}(Q)
=(\frac{k}{2})^{|V(\widehat{G})|-1}t(D^*)\prod_{m=1}^{|E(\widehat{G})|}{\mathrm{det}(Q_{mm})}\\
&=t(D^*)k^{|E(\widehat{G})|(k-3)+|V(\widehat{G})|-1}2^{|E(\widehat{G})|-V(\widehat{G})+1}\prod_{\{i,j\} \in E(\widehat{G})}{\left(\frac{m_{D^*}(i,j)+m_{D^*}(j,i)}{2}\right)}^{k-2}.
\end{align*}
\end{proof}

\subsection{The number of covering closed walks}\label{sec:coveringclosedwalks}

In a related paper \cite{Chen2020Spectral}, the spectral moment coefficients of power hypertrees were reduced to the spectral moment coefficients of trees as follows.
\begin{lem}\cite{Chen2020Spectral}\label{shudepujuxishuhuajiangongshi}
Let $k\geq3$ and let $\widehat{T}$ be a a tree.
Then
$$c_{\ell k}(\widehat{T}^{(k)})=\frac{ k^{|E(\widehat{T})|(k-2)+1}}{2(k-1)^{|V(\widehat{T})|+|E(\widehat{T})|(k-2)-1}}c_{2\ell}(\widehat{T}).$$
\end{lem}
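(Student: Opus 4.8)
The plan is to specialize the general machinery to the case $\widehat G = \widehat T$ a tree and match the two sides. The key reduction is that for a tree, a covering parity-closed walk of length $2\ell$ in $\widehat T$ and a covering closed walk of length $2\ell$ in $\widehat T$ are the same thing: in a tree every edge lies on no cycle, so every closed walk uses each edge an even number of times automatically. Hence $p_{2\ell}(\widehat T) = c_{2\ell}(\widehat T)$, the number of covering closed walks of length $2\ell$ in $\widehat T$ (using the identification, recalled just after \eqref{spectralcoefficient}, that for a graph the spectral moment coefficient equals the number of covering closed walks). So it suffices to express $c_{\ell k}(\widehat T^{(k)})$ in terms of $p_{2\ell}(\widehat T)$.

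For this I would start from Lemma \ref{pujuxishuyinli}, which gives
$$c_{\ell k}(\widehat T^{(k)}) = \ell k (k-1)((k-1)!)^{-\ell k}\sum_{D \in \mathfrak{D}_{\ell k}(\widehat T^{(k)})} \frac{|\{f : f\in \mathcal{F}_{\ell k}(\widehat T^{(k)}),\, D_f \cong D\}|\, t(D)}{\prod_{v\in V(D)} \mathrm{deg}_D^+(v)}.$$
Then I would substitute the spanning-tree reduction from Lemma \ref{scsyinli} for $t(D)$, use the one-one correspondence $D \leftrightarrow D^*$ between $\mathfrak{D}_{\ell k}(\widehat T^{(k)})$ and $\mathfrak{D}^*$ established after Lemma \ref{chongshuyinliEulerian}, and compute the product of outdegrees of $D$ in terms of those of $D^*$: by Lemma \ref{chongshuyinli} a core vertex $v\in \mathcal{N}_{ij}$ has $\mathrm{deg}_D^+(v) = (k-1)m_D(v,i) = (k-1)\tfrac{m_{D^*}(i,j)+m_{D^*}(j,i)}{2}$, while a non-core vertex $i$ has $\mathrm{deg}_D^+(i) = (k-1)\mathrm{deg}_{D^*}^+(i)$. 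Multiplying these out (there are $(k-2)|E(\widehat T)|$ core vertices and $|V(\widehat T)|$ non-core ones) pulls out a clean power of $(k-1)$ together with a factor $\prod_{\{i,j\}}\big(\tfrac{m_{D^*}(i,j)+m_{D^*}(j,i)}{2}\big)^{k-2}$, which is exactly the factor appearing in Lemma \ref{scsyinli}, so it cancels. What remains should be $\mathrm{const}(k,\ell,\widehat T)\cdot \sum_{D^*}\frac{|\{f:D_f\cong D\}|\, t(D^*)}{\prod_{v\in V(D^*)}\mathrm{deg}_{D^*}^+(v)}$, and Lemma \ref{pujuxishuyinli} applied to the graph $\widehat T$ (with $k=2$) identifies this last sum, up to its own explicit constant, with $c_{2\ell}(\widehat T) = p_{2\ell}(\widehat T)$.

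The two remaining bookkeeping points are: (i) relating the counting factor $|\{f \in \mathcal{F}_{\ell k}(\widehat T^{(k)}) : D_f\cong D\}|$ to the corresponding factor $|\{g \in \mathcal{F}_{2\ell}(\widehat T) : D_g \cong D^*\}|$ — since each rooted hyperedge of $\widehat T^{(k)}$ is determined by an orientation of an edge of $\widehat T$ plus an ordering of the $k-2$ core vertices and the other endpoint, the number of $f$'s mapping to a given $D$ is the number of $g$'s mapping to $D^*$ times a factor depending only on $k$, $\ell$, and the multiplicities (which, after the correspondence, again matches up); and (ii) collecting all the pure powers of $k$, $2$, and $(k-1)$ and the factorial/combinatorial constants and checking they assemble into $\tfrac{k^{|E(\widehat T)|(k-2)+1}}{2(k-1)^{|V(\widehat T)|+|E(\widehat T)|(k-2)-1}}$. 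I expect step (ii) — the exact reconciliation of all the constant factors ($((k-1)!)^{-\ell k}$ versus $((k-1)!)^{-2\ell} = 2^{-2\ell}$, the $\mathrm{det}(kI-J) = 2k^{k-3}$ contributions, the $\ell k(k-1)$ versus $2\ell$ prefactors, and the power of $k-1$ from the outdegree product) — to be the main obstacle; it is not conceptually hard but is the place where a misplaced exponent would break the identity, so I would carry it out carefully, perhaps first checking the special case $\widehat T = K_2$ (a single edge, so $\widehat T^{(k)}$ is a single hyperedge) where $t(D^*)=t(D)=1$ and everything is explicit.
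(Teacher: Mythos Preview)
The paper does not give its own proof of this lemma; it is quoted from \cite{Chen2020Spectral} and then immediately generalized in Lemma~\ref{pujuxishudingli}. Your proposal is, in effect, the specialization to trees of the paper's proof of that generalization, and the strategy is sound. The only structural difference is that where you plan to recognize the residual sum over $D^*$ as $c_{2\ell}(\widehat T)$ by reapplying Lemma~\ref{pujuxishuyinli} at $k=2$, the paper instead applies the BEST theorem directly (Lemma~\ref{lem:parityclosedeulerian}) to identify it with $p_{2\ell}(\widehat G)$. For trees the two routes coincide, since every Eulerian multi-digraph $D^*$ on a tree automatically has $m_{D^*}(i,j)+m_{D^*}(j,i)$ even for each edge (prove it by stripping leaves), so $\mathfrak D_{2\ell}(\widehat T)=\mathfrak D^*$; for general $\widehat G$ this fails, which is why the paper needs $p_{2\ell}$ rather than $c_{2\ell}$ on the right-hand side.

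One imprecision in your step~(i): the phrase ``each rooted hyperedge of $\widehat T^{(k)}$ is determined by an orientation of an edge of $\widehat T$ plus an ordering\ldots'' ignores rooted hyperedges whose root is a \emph{core} vertex. The paper's count in \eqref{shizi8} handles both cases at once (permuting hyperedges with a core root is trivial since they are all identical), and yields $|\{f:D_f\cong D\}|=((k-1)!)^{\ell k}\prod_{i\in V(\widehat T)}(\deg_{D^*}^+(i))!\big/\prod_{(i,j)\in E(D^*)}m_{D^*}(i,j)!$. With this formula the ratio you allude to in (i) is exactly $((k-1)!)^{\ell k}$, and the constant bookkeeping in (ii) then goes through as in the paper's derivation of Lemma~\ref{pujuxishudingli}; the tree identity $|V(\widehat T)|=|E(\widehat T)|+1$ collapses the general prefactor $2^{|E|-|V|}k^{|E|(k-3)+|V|}$ into the stated $\tfrac12 k^{|E(\widehat T)|(k-2)+1}$.
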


Note that $c_{2\ell}(\widehat{T})$ is the number of covering closed walks of length $2\ell$ in $\widehat{T}$. We recall that a closed walk in which every edge is used an even number of times is called a parity-closed walk and that we denote the number of covering parity-closed walks of length $2\ell$ in $\widehat{G}$ by $p_{2\ell}(\widehat{G})$.
It is clear that for trees, every closed walk is parity-closed, so $c_{2\ell}(\widehat{T})= p_{2\ell}(\widehat{T})$.

Lemma \ref{shudepujuxishuhuajiangongshi} does not hold in the case of general graphs, but using the earlier results in this section, it can be generalized as follows.

\begin{lem}\label{pujuxishudingli}
Let $k\geq3$ and let $\widehat{G} \in \mathcal{G}(\ell)$. Then
\begin{align}\label{czkpzk}
c_{\ell k}(\widehat{G}^{(k)})=\frac{2^{|E(\widehat{G})|-|V(\widehat{G})|} k^{|E(\widehat{G})|(k-3)+|V(\widehat{G})|}}{(k-1)^{|V(\widehat{G})|+|E(\widehat{G})|(k-2)-1}}p_{2\ell}(\widehat{G}).
\end{align}
\end{lem}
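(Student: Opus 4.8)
The plan is to combine the three reduction lemmas already established in this section to turn the abstract formula for $c_{\ell k}(\widehat{G}^{(k)})$ from Lemma \ref{pujuxishuyinli} into a statement purely about $D^*$ and covering parity-closed walks in $\widehat{G}$. Starting point: by Lemma \ref{jihedingli} we have $\mathcal{G}_{\ell k}=\mathcal{G}(\ell)$, so $\widehat{G}\in\mathcal{G}(\ell)$ is exactly the right index set, and $d=\ell k$ gives $|E(D^*)|=2\ell$ for every $D\in\mathfrak{D}_{\ell k}(\widehat{G}^{(k)})$. Apply Lemma \ref{pujuxishuyinli} with $d=\ell k$: $c_{\ell k}(\widehat{G}^{(k)})$ equals $\ell k(k-1)((k-1)!)^{-\ell k}$ times $\sum_{D}\bigl|\{f:D_f\cong D\}\bigr|\,t(D)/\prod_{v\in V(D)}\mathrm{deg}_D^+(v)$. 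The idea is to replace each $D$-quantity by its $D^*$-counterpart using Lemmas \ref{chongshuyinli}--\ref{scsyinli}, collect all the $k$- and $2$-powers, match them against the numerator of \eqref{czkpzk}, and recognize the remaining $D^*$-sum as $p_{2\ell}(\widehat{G})$ via Lemma \ref{lem:parityclosedeulerian}.

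The key steps, in order: (i) Compute $\prod_{v\in V(D)}\mathrm{deg}_D^+(v)$ by splitting the product over $V(\widehat{G})$ and over core vertices. For $i\in V(\widehat{G})$, the proof of Lemma \ref{chongshuyinliEulerian} gives $\mathrm{deg}_D^+(i)=(k-1)\mathrm{deg}_{D^*}^+(i)$; for a core vertex $v\in\mathcal{N}_{ij}$, Lemma \ref{chongshuyinli} gives $\mathrm{deg}_D^+(v)=(k-1)m_D(v,i)=(k-1)\tfrac{m_{D^*}(i,j)+m_{D^*}(j,i)}{2}$, and there are $k-2$ such core vertices per edge. So $\prod_v\mathrm{deg}_D^+(v)=(k-1)^{|V(\widehat G)|+|E(\widehat G)|(k-2)}\prod_{i}\mathrm{deg}_{D^*}^+(i)\cdot\prod_{\{i,j\}}\bigl(\tfrac{m_{D^*}(i,j)+m_{D^*}(j,i)}{2}\bigr)^{k-2}$. (ii) Substitute $t(D)$ from Lemma \ref{scsyinli}. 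The $\bigl(\tfrac{m_{D^*}(i,j)+m_{D^*}(j,i)}{2}\bigr)^{k-2}$ factors in $t(D)$ cancel exactly against those in $\prod_v\mathrm{deg}_D^+(v)$ — this is the crucial cancellation that makes the statement clean. (iii) Determine $\bigl|\{f\in\mathcal{F}_{\ell k}(\widehat{G}^{(k)}):D_f\cong D\}\bigr|$: since $D$ and $D^*$ determine each other bijectively (remark after Lemma \ref{chongshuyinliEulerian}) and the rooted-edge data $f$ is recovered from the arc multiset $\Theta(f)=E(D)$ up to the automorphisms already quotiented out, this count should reduce to the analogous count for $D^*$, contributing nothing new beyond what is absorbed into $p_{2\ell}$. (iv) Assemble: after cancellation, $c_{\ell k}(\widehat{G}^{(k)})=\ell k(k-1)((k-1)!)^{-\ell k}\cdot(k-1)^{-(|V(\widehat G)|+|E(\widehat G)|(k-2))+1}\cdot k^{|E(\widehat G)|(k-3)+|V(\widehat G)|-1}2^{|E(\widehat G)|-|V(\widehat G)|+1}\cdot\sum_{D^*}\bigl|\{\cdots\}\bigr|\,t(D^*)/\prod_i\mathrm{deg}_{D^*}^+(i)$; then match the sum against Lemma \ref{lem:parityclosedeulerian}, noting $\prod_i\mathrm{deg}_{D^*}^+(i)=\prod_i\mathrm{deg}_{D^*}^+(i)$ relates to $\prod_i(\mathrm{deg}_{D^*}^+(i)-1)!$ together with the $b(D^*)$ and $|E(D^*)|=2\ell$ factors from the BEST theorem.

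The main obstacle I anticipate is the bookkeeping in step (iv): reconciling the $((k-1)!)^{-\ell k}$ normalization and the factor $\ell k(k-1)$ from Lemma \ref{pujuxishuyinli} with the $|E(D^*)|/b(D^*)\prod(\mathrm{deg}_{D^*}^+(v)-1)!$ shape in Lemma \ref{lem:parityclosedeulerian}, while simultaneously tracking how the multiplicity data $\{f:D_f\cong D\}$ translates between $D$ and $D^*$. Concretely, one must verify that $\ell k(k-1)\cdot((k-1)!)^{-\ell k}\cdot\bigl|\{f:D_f\cong D\}\bigr|/\prod_i\mathrm{deg}_{D^*}^+(i)$, after the $(k-1)$-power adjustment, collapses to $\tfrac{|E(D^*)|}{b(D^*)}\prod_i(\mathrm{deg}_{D^*}^+(i)-1)!$ summed appropriately — i.e. that the hypergraph rooting multiplicities and the digraph outdegree factorials conspire correctly. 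It may be cleanest to bypass this by directly invoking the definition \eqref{youxiangtu} of $c_d(\widehat H)$ with $b(f)=b(D)$ and $c(f)=\prod_v\mathrm{deg}_D^+(v)!$, rewrite $b(D)$ and $c(D)$ in terms of $D^*$ via Lemma \ref{chongshuyinli}, apply the BEST theorem (Lemma \ref{biwalkgongshi}) to $|\mathcal{W}(f)|$, and only at the very end compare with $p_{2\ell}(\widehat G)$ through Lemma \ref{lem:parityclosedeulerian}; this keeps all factorials in explicit form until the final collation. Everything else — the edge count $|E(D^*)|=2\ell$, the outdegree relations, the spanning-tree reduction, the key cancellation of the $\bigl(\tfrac{m_{D^*}(i,j)+m_{D^*}(j,i)}{2}\bigr)^{k-2}$ terms — is already packaged in the preceding lemmas and should go through mechanically.
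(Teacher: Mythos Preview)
Your outline is correct and matches the paper's proof almost step for step: start from Lemma~\ref{pujuxishuyinli}, split $\prod_v\mathrm{deg}_D^+(v)$ over core and non-core vertices, substitute Lemma~\ref{scsyinli} so that the $\bigl(\tfrac{m_{D^*}(i,j)+m_{D^*}(j,i)}{2}\bigr)^{k-2}$ factors cancel, and finish with Lemma~\ref{lem:parityclosedeulerian} using $|E(D^*)|=2\ell$. The one place where you are too vague is step~(iii): the fiber $\bigl|\{f:D_f\cong D\}\bigr|$ is not ``nothing new'' --- it is exactly $((k-1)!)^{\ell k}\prod_{i\in V(\widehat G)}(\mathrm{deg}_{D^*}^+(i))!\big/\prod_{(i,j)\in E(D^*)}m_{D^*}(i,j)!$, obtained by permuting the $k-1$ non-roots of each rooted hyperedge and permuting equal rooted hyperedges sharing a non-core root; this factor is what kills the $((k-1)!)^{-\ell k}$ and supplies the $(\mathrm{deg}_{D^*}^+(i)-1)!$ and $b(D^*)^{-1}$ needed to recognize $p_{2\ell}(\widehat G)$ via Lemma~\ref{lem:parityclosedeulerian}.
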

\begin{proof}
From the spectral moment coefficient in Lemma \ref{pujuxishuyinli}, we have
\begin{align}\label{shizi7}
c_{\ell k}(\widehat{G}^{(k)})=\ell k(k-1)((k-1)!)^{-\ell k}\sum_{D \in \mathfrak{D}_{\ell k}(\widehat{G}^{(k)})}{\frac{\left|\{f:f \in \mathcal{F}_{\ell k}(\widehat{G}^{(k)})~ \mathrm{and} ~ D_f\cong D \}\right|t(D)}{\prod_{v \in V(D)}\mathrm{deg}_D^+(v)}}.
\end{align}

First recall how $D_f \cong D$ is constructed from $\ell k$ rooted hyperedges of $\widehat{G}^{(k)}$. In order to count the number of $f$ that give rise to a given $D$, note that we can permute the $k-1$ non-roots of each hyperedge (without changing $D_f$). We can also permute the hyperedges with the same root. If this root is a core vertex, then this however leads to the same $f$ because the permuted hyperedges are the same. If the root, $i$ say, is not a core vertex, then it occurs $\deg^+_{D^*}(i)$ times, but again, permuting the same hyperedges (with hyperedge $\{i,j\}^{(k)}$ with root $i$ occuring $m_{D^*}(i,j)$ times) leads to the same $f$. This implies that
\begin{align}\label{shizi8}
\left|\{f:f \in \mathcal{F}_{\ell k}(\widehat{G}^{(k)})~ \mathrm{and} ~ D_f\cong D \}\right|=((k-1)!)^{\ell k}\frac{\prod_{i \in V(\widehat{G})}(\deg^+_{D^*}(i))!}{\prod_{(i,j) \in E(D^*)}{m_{D^*}(i,j)!}}
\end{align}

From Lemma \ref{chongshuyinli}, it follows that if $v \in \mathcal{N}_{ij}$, then
$\deg_D^+(v)=(k-1)\frac{m_{D}(i,j)+m_{D}(j,i)}{2}$, which implies that
\begin{align}\label{shizi9}
&\prod_{v \in V(D)}{\mathrm{deg}_D^+(v)}
=\prod_{v \in V(D^*)}{\mathrm{deg}_D^+(v)}\prod_{v \in {V(D)\setminus V(D^*)}}{\mathrm{deg}_D^+(v)}\notag \\
&=\prod_{v \in V(D^*)}{(k-1)\mathrm{deg}_{D^*}^+(v)}\prod_{\{i,j\} \in E(\widehat{G})}{\left((k-1)\frac{m_{D^*}(i,j)+m_{D^*}(j,i)}{2}\right)^{k-2}}\notag \\
&=(k-1)^{|V(\widehat{G})|+(k-2)|E(\widehat{G})|}\prod_{v \in V(\widehat{G})}{\mathrm{deg}_{D^*}^+(v)}\prod_{\{i,j\} \in E(\widehat{G})}{\left(\frac{m_{D^*}(i,j)+m_{D^*}(j,i)}{2}\right)^{k-2}}.
\end{align}

Next, recall that there is a one-one correspondence between $\mathfrak{D}^*=\mathfrak{D}^*_d(\widehat{G}^{(k)})$ and $\mathfrak{D}_d(\widehat{G}^{(k)})$.
After substituting \eqref{scsshizi}, \eqref{shizi8} and  \eqref{shizi9} into \eqref{shizi7}, eliminating and collecting terms, we then obtain that
\begin{align*}
c_{\ell k}(\widehat{G}^{(k)})&=\ell \frac{k^{|E(\widehat{G})|(k-3)+|V(\widehat{G})|} 2^{|E(\widehat{G})|-|V(\widehat{G})|+1}}{(k-1)^{|V(\widehat{G})|+|E(\widehat{G})|(k-2)-1}} \sum_{D^* \in \mathfrak{D}^*} \frac{ \prod_{v \in V(D^*)}(\mathrm{deg}_{D^*}^+(v)-1)!t(D^*) }{\prod_{(i,j) \in E(D^*)}{m_{D^*}(i,j)!}}\\
&=\frac{2^{|E(\widehat{G})|-|V(\widehat{G})|} k^{|E(\widehat{G})|(k-3)+|V(\widehat{G})|}}{(k-1)^{|V(\widehat{G})|+|E(\widehat{G})|(k-2)-1}}\sum_{D^* \in \mathfrak{D}^* }\frac{|E(D^*)|}{b(D^*)}\prod_{v \in V(D^*)}(\mathrm{deg}_{D^*}^+(v)-1)!t(D^*),
\end{align*}
where in the second equality, we used that $|E(D^*)|=2\ell$ by Lemma \ref{jihedingli} and $b(D^*)$ equals the product of the factorials of the multiplicities of the edges.
By Lemma \ref{lem:parityclosedeulerian}, \eqref{czkpzk} now follows.
\end{proof}

From \eqref{shizi5}, \eqref{shizi1.1}, Lemmas \ref{jihedingli} and  \ref{pujuxishudingli}, we finally obtain our claimed expression for the spectral moment of $k$-power hypergraphs.

\begin{pro}\label{zydl}
Let $k \geq 3$. Then
\begin{align*}
\mathrm{S}_d(G^{(k)})={\mathcal{S}}_d(k).
\end{align*}
\end{pro}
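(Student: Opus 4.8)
The plan is to assemble Proposition \ref{zydl} essentially as a bookkeeping exercise that combines the three reductions carried out in Sections \ref{sec2.2}--\ref{sec:coveringclosedwalks}. The starting point is formula \eqref{shizi5}, which already expresses $\mathrm{S}_d(G^{(k)})$ as $0$ when $k \nmid d$ and as $(k-1)^{|V(G^{(k)})|-1}\sum_{\widehat{G} \in \mathcal{G}_d} c_d(\widehat{G}^{(k)}) N_G(\widehat{G})$ when $k \mid d$. So for $k \nmid d$ there is nothing to prove, since ${\mathcal{S}}_d(k)=0$ by definition in \eqref{shizi1.1}. For the case $k \mid d$, write $d = \ell k$; then $\mathcal{G}_d = \mathcal{G}(\tfrac{d}{k}) = \mathcal{G}(\ell)$ by Lemma \ref{jihedingli}, so the index set of the sum in \eqref{shizi5} matches the index set $\mathcal{G}(\tfrac{d}{k})$ appearing in \eqref{shizi1.1}.

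The heart of the argument is then just to substitute the closed form for the spectral moment coefficient. For each $\widehat{G} \in \mathcal{G}(\ell)$, Lemma \ref{pujuxishudingli} gives
\[
c_{\ell k}(\widehat{G}^{(k)}) = \frac{2^{|E(\widehat{G})|-|V(\widehat{G})|}\, k^{|E(\widehat{G})|(k-3)+|V(\widehat{G})|}}{(k-1)^{|V(\widehat{G})|+|E(\widehat{G})|(k-2)-1}}\, p_{2\ell}(\widehat{G}),
\]
and $2\ell = \tfrac{2d}{k}$, so $p_{2\ell}(\widehat{G}) = p_{2d/k}(\widehat{G})$, matching the $p_{\frac{2d}{k}}(\widehat{G})$ factor in \eqref{shizi1.1}. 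Plugging this into \eqref{shizi5} and factoring the global constant $(k-1)^{|V(G^{(k)})|-1}$ through the sum, I would check that the exponent bookkeeping agrees: $|V(G^{(k)})| = |V| + |E|(k-2)$, so $(k-1)^{|V(G^{(k)})|-1} = (k-1)^{|V|+|E|(k-2)-1}$, which is exactly the prefactor in \eqref{shizi1.1}. The per-motif factor $\dfrac{2^{|E(\widehat{G})|-|V(\widehat{G})|}\,k^{|E(\widehat{G})|(k-3)+|V(\widehat{G})|}}{(k-1)^{|V(\widehat{G})|+|E(\widehat{G})|(k-2)-1}}$ coming from Lemma \ref{pujuxishudingli} then lines up term-for-term with the coefficient inside the sum in \eqref{shizi1.1}. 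Hence the two expressions are literally identical, and $\mathrm{S}_d(G^{(k)}) = {\mathcal{S}}_d(k)$.

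There is essentially no analytic obstacle here; the content has already been extracted in Lemmas \ref{jihedingli} and \ref{pujuxishudingli}, and Proposition \ref{zydl} is the act of collating them. The one point that needs genuine care is the exponent arithmetic: verifying that the global prefactor $(k-1)^{|V(G^{(k)})|-1}$ from \eqref{shizi5} and the motif-dependent powers of $k$, $2$, and $(k-1)$ from Lemma \ref{pujuxishudingli} reassemble exactly into the single expression \eqref{shizi1.1}, with no stray factor left over. I would also make explicit the trivial but necessary observation that $\mathcal{G}_d = \mathcal{G}(\tfrac{d}{k})$ (from Lemma \ref{jihedingli}) is what allows the index sets to be identified, and that the $k \nmid d$ case is handled by the $k$-symmetry of the spectrum already recorded before \eqref{shizi5}. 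So the write-up is short: dispose of $k \nmid d$, then for $k = \ell k / \ell$ substitute Lemmas \ref{jihedingli} and \ref{pujuxishudingli} into \eqref{shizi5}, collect exponents, and compare with \eqref{shizi1.1}.
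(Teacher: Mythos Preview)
Your proposal is correct and matches the paper's own approach exactly: the paper states that Proposition \ref{zydl} follows directly from \eqref{shizi5}, \eqref{shizi1.1}, and Lemmas \ref{jihedingli} and \ref{pujuxishudingli}, without writing out any further details. Your write-up simply makes explicit the exponent bookkeeping and index-set identification that the paper leaves implicit.
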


\section{The characteristic polynomial of power hypergraphs}\label{sec:charpoly}

So far, we have used the ``trace formula" for tensors to derive the spectral moments of the power hypergraph $G^{(k)}$. From Lemma \ref{xiangyi}, we can find the eigenvalues of $G^{(k)}$ from the eigenvalues of the signed graphs on $G$. In this section, we will discuss how to derive the multiplicities of the eigenvalues using these eigenvalues, numbers of parity-closed walks in $G$, and spectral moments. The derived expressions will depend explicitly on $k$.

Let $\Sigma$ denote the set of squares of the nonzero eigenvalues of all signed subgraphs of $G$, let $\varsigma=|\Sigma|$, and denote the elements of $\Sigma$ by $\sigma^2_1,\sigma^2_2,\ldots,\sigma^2_{\varsigma}$.

By Lemma \ref{xiangyi} and the fact that the spectrum of $G^{(k)}$ is $k$-symmetric \cite{hu2014eigenvectors} (see also \cite[Theorem 3.1]{shao2015some}), we can write the characteristic polynomial $\phi_k(\lambda)$ of $G^{(k)}$ as
\begin{equation}\label{michaotuyinli}
\phi_k(\lambda)={\lambda^{\mu_0(k)}}\prod_{i=1}^{\varsigma}(\lambda^k-\sigma_i^2)^{\mu_i(k)},
\end{equation}
where $\mu_i(k)$ is the multiplicity of the corresponding eigenvalues $\lambda$.
Note that $\sigma_i^2 \in \Sigma$ does not necessarily give rise to an eigenvalue of $G^{(3)}$ if $\pm|\sigma_i|$ is not an eigenvalue of a signed {\em induced} subgraph. In that case $\mu_i(3)=0$.

Let
$$\mathrm{M}=\begin{bmatrix}
{\sigma^2_1}&{\sigma^2_2}& \cdots &{\sigma^2 _{\varsigma}}\\
{\sigma^4_1}&{\sigma^4_2}& \cdots &{\sigma^4 _{\varsigma}}\\
 \vdots & \vdots & \ddots & \vdots \\
{\sigma_1^{2\varsigma}}&{\sigma_2^{2\varsigma}}& \cdots &{\sigma_{\varsigma}^{2\varsigma }}
\end{bmatrix},$$
which is an invertible (cf.~Vandermonde) matrix of coefficients.
Let the multiplicity vector be $\mu(k)=\left(\mu_1 (k),\mu_2 (k),\ldots, \mu_{\varsigma} (k)\right)^{\top}$ and the spectral moments vector be $S(k)=\left({{\mathrm{S}_k}({G^{(k)}})},{{\mathrm{S}_{2k}}({G^{(k)}})}, \ldots,{{\mathrm{S}_{\varsigma k }}({G^{(k)}})}\right)^{\top}$.

From  \eqref{michaotuyinli}, the spectral moments can be written as $\mathrm{S}_{\ell k}(G^{(k)})=\sum_{i=1}^{\varsigma}{k\mu_i(k)\sigma^{2\ell}_i}$ for all positive integers $\ell$, which leads to the (nonsingular) system of equations
$$k\mathrm{M}\mu(k)=S(k)$$
for the multiplicities.

Next, we will rewrite $S(k)$ using the subgraph structural interpretation \eqref{shizi1.1} of the  spectral moments (Proposition \ref{zydl}).
Let $\chi=|\mathcal{G}(\varsigma)|$, denote the (non-isomorphic) subgraphs of $G$ with at most $\varsigma$ edges by $\widehat{G}_1, \widehat{G}_2, \ldots, \widehat{G}_{\chi}$ and let the parity-closed walk matrix be
\begin{align*}
\mathrm{P}=\begin{bmatrix}
{p_2(\widehat{G}_1)}&{p_2(\widehat{G}_2)}& \cdots &{p_2(\widehat{G}_{\chi})}\\
{p_{4}(\widehat{G}_1)}&{p_4(\widehat{G}_2)}& \cdots &{p_{4}(\widehat{G}_{\chi})}\\
 \vdots & \vdots & \ddots & \vdots \\
{p_{2\varsigma}(\widehat{G}_1)}&{p_{2\varsigma}(\widehat{G}_2)}& \cdots &{p_{2\varsigma}(\widehat{G}_{\chi})}
\end{bmatrix}.
\end{align*}
Moreover, consider the subgraph number vector $N=\left(N_G(\widehat{G}_1),N_G(\widehat{G}_2),\ldots,N_G(\widehat{G}_{\chi})\right)^{\top}$
and the diagonal $\chi \times \chi $ matrix $\mathrm{D}(k)$ with
$$(\mathrm{D}(k))_{ii}=
\frac{2^{|E(\widehat{G}_i)|-|V(\widehat{G}_i)|} k^{|E(\widehat{G}_i)|(k-3)+|V(\widehat{G}_i)|}}{(k-1)^{|V(\widehat{G}_i)|+|E(\widehat{G}_i)|(k-2)-1}}$$
for $i \in [\chi]$.
From Proposition \ref{zydl} and \eqref{shizi1.1}, with the additional remark that ${p_{2\ell}(\widehat{G_i})}=0$ when $|E(\widehat{G_i})| > \ell$ (i.e., $\mathrm{P}_{\ell i}=0$ when $\widehat{G_i} \notin \mathcal{G}(\ell)$), we then have
$${\mathrm{S}_{\ell k }}({G^{(k)}})=(k-1)^{|V|+(k-2)|E|-1}\sum_{i=1}^{\chi}\mathrm{D}(k)_{ii}\mathrm{P}_{\ell i}N_i,$$
and hence
$$\mu(k) =k^{-1}\mathrm{M}
^{-1}S(k)=\frac{(k-1)^{|V|+(k-2)|E|-1}}{k}\mathrm{M}^{-1}\mathrm{P}\mathrm{D}(k)N.$$

Thus, we have expressions for the multiplicities in $\mu(k)$. The remaining multiplicity $\mu_0(k)$ clearly follows from these, and we may conclude the following.

\begin{thm}\label{pro5.1}
Let $k \geq 3$.
Then the characteristic polynomial of $G^{(k)}$ is
\begin{equation*}
\phi_k(\lambda)={\lambda^{\mu_0(k)}}\prod_{i=1}^{\varsigma}(\lambda^k-\sigma_i^2)^{\mu_i(k)},
\end{equation*}
where $\mu_i(k)=\frac{(k-1)^{|V|+(k-2)|E|-1}}{k}(\mathrm{M}^{-1}\mathrm{P}\mathrm{D}(k)N)_i$ for $i \in [{\varsigma}]$ and $$\mu_0(k)=(|V|+(k-2)|E|)(k-1)^{|V|+(k-2)|E|-1}-k\sum_{i=1}^{\varsigma}\mu_i(k).$$
\end{thm}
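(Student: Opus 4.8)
The plan is to assemble Theorem \ref{pro5.1} directly from the ingredients already developed in the text, so the proof is essentially a matter of bookkeeping rather than new ideas. First I would recall the factorization \eqref{michaotuyinli} of $\phi_k(\lambda)$, which is justified by Lemma \ref{xiangyi} (the eigenvalues of $G^{(k)}$ are exactly the $\lambda$ with $\lambda^k = \sigma_i^2$ for some square $\sigma_i^2 \in \Sigma$, together with $\lambda = 0$) and by the $k$-symmetry of the spectrum of $G^{(k)}$. The $k$-symmetry is what forces each nonzero eigenvalue $\lambda$ with $\lambda^k=\sigma_i^2$ to come with all its $k$ rotated copies, all with the same multiplicity $\mu_i(k)$, so that the nonzero part of $\phi_k$ groups into factors $(\lambda^k-\sigma_i^2)^{\mu_i(k)}$; then $\mu_0(k)$ is the multiplicity of $\lambda = 0$.

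Next I would translate ``knowing the multiplicities'' into a linear system. Taking the $\ell k$-th spectral moment of $G^{(k)}$ and using that only eigenvalues with $\lambda^k = \sigma_i^2$ contribute (and each such eigenvalue raised to the power $\ell k$ equals $\sigma_i^{2\ell}$), one gets $\mathrm{S}_{\ell k}(G^{(k)}) = \sum_{i=1}^{\varsigma} k\mu_i(k)\sigma_i^{2\ell}$ for every positive integer $\ell$, since there are $k$ rotated eigenvalues for each $i$ and the $\lambda=0$ eigenvalue contributes nothing. Writing this for $\ell = 1, \dots, \varsigma$ gives the matrix equation $k\mathrm{M}\mu(k) = S(k)$, where $\mathrm{M}$ is the Vandermonde-type matrix in the $\sigma_i^2$. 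Since the $\sigma_i^2$ are distinct by definition of $\Sigma$, $\mathrm{M}$ is invertible, so $\mu(k) = k^{-1}\mathrm{M}^{-1}S(k)$.

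Then I would substitute the combinatorial expression for the spectral moments. By Proposition \ref{zydl} we have $\mathrm{S}_{\ell k}(G^{(k)}) = \mathcal{S}_{\ell k}(k)$, and plugging in \eqref{shizi1.1} — together with the observation that $p_{2\ell}(\widehat{G}_i) = 0$ as soon as $|E(\widehat{G}_i)| > \ell$, so the sum over $\mathcal{G}(\ell)$ can be extended to all of $\mathcal{G}(\varsigma)$ at the cost of adding zero terms — yields $\mathrm{S}_{\ell k}(G^{(k)}) = (k-1)^{|V|+(k-2)|E|-1}\sum_{i=1}^{\chi}\mathrm{D}(k)_{ii}\mathrm{P}_{\ell i}N_i$. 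In matrix form this is $S(k) = (k-1)^{|V|+(k-2)|E|-1}\mathrm{P}\mathrm{D}(k)N$, and combining with $\mu(k) = k^{-1}\mathrm{M}^{-1}S(k)$ gives the stated formula $\mu_i(k) = \frac{(k-1)^{|V|+(k-2)|E|-1}}{k}(\mathrm{M}^{-1}\mathrm{P}\mathrm{D}(k)N)_i$. Finally, for $\mu_0(k)$ I would use that the total degree of $\phi_k$ equals the number of eigenvalues of $G^{(k)}$ counted with multiplicity, which for a $k$-order tensor of dimension $n = |V(G^{(k)})| = |V| + (k-2)|E|$ is $n(k-1)^{n-1}$; since the nonzero factors account for $k\sum_{i=1}^{\varsigma}\mu_i(k)$ of these, subtracting gives the claimed $\mu_0(k)$.

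The main obstacle, and the only place where care is really needed, is making sure the bookkeeping in the last step is right: one must invoke the correct formula $n(k-1)^{n-1}$ for the degree of the characteristic polynomial of a $k$-order $n$-dimensional tensor and verify it is consistent with how $\mathcal{S}_d(k)$ was normalized (the prefactor $(k-1)^{|V|+|E|(k-2)-1}$ in \eqref{shizi1.1} should match), and one should double-check that each nonzero eigenvalue family indeed contributes exactly $k$ roots so that the exponent bookkeeping $\lambda^{\mu_0(k)}\prod(\lambda^k-\sigma_i^2)^{\mu_i(k)}$ has total degree $\mu_0(k) + k\sum_i \mu_i(k)$. Everything else is a direct chaining of Lemma \ref{xiangyi}, the $k$-symmetry, Proposition \ref{zydl}, and the invertibility of $\mathrm{M}$.
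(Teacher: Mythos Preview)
Your proposal is correct and follows essentially the same route as the paper: invoke the factorization \eqref{michaotuyinli} via Lemma \ref{xiangyi} and $k$-symmetry, read off the linear system $k\mathrm{M}\mu(k)=S(k)$ from the spectral moments, substitute Proposition \ref{zydl} and \eqref{shizi1.1} (extending the sum to $\mathcal{G}(\varsigma)$ using $p_{2\ell}(\widehat{G}_i)=0$ for $|E(\widehat{G}_i)|>\ell$) to get $S(k)=(k-1)^{|V|+(k-2)|E|-1}\mathrm{P}\mathrm{D}(k)N$, and recover $\mu_0(k)$ from the total degree $n(k-1)^{n-1}$ with $n=|V|+(k-2)|E|$. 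The only difference is that you spell out the degree count for $\mu_0(k)$ explicitly, whereas the paper simply asserts it ``clearly follows.''
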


\section{The geometric mean of the characteristic polynomials of all signed graphs with the same underlying graph}\label{sec4}

Using the expression in Theorem \ref{pro5.1}, we can extend the characteristic polynomial $\phi_k(\lambda)$ of a power hypergraph to $k=2$.
We thus define the pseudo-characteristic function
\begin{align*}
 \beta(\lambda)\equiv \phi_2(\lambda)={\lambda^{\mu_0(2)}}\prod_{i=1}^{\varsigma}(\lambda^2-\sigma_i^2)^{\mu_i(2)},
\end{align*}
where $\mu_i(2)=\frac{1}{2}(\mathrm{M}^{-1}\mathrm{P}N)_i$ for $i \in [{\varsigma}]$ (note that $\mathrm{D}(2)=I$) and $\mu_0(2)=|V|-2\sum_{i=1}^{\varsigma}\mu_i(2)$.
Note that $\beta(\lambda)$ is not a polynomial, in general, because the $\mu_i(2)$ are not necessarily integer for $i=0,1,\ldots,\varsigma$.

It is important to note that also for $k=2$, the ``spectral moments'' (of $\beta(\lambda)$) are determined by the equation $2\mathrm{M}\mu(2)=S(2)$. On the other hand, we defined $\mu(2)=\frac{1}{2}\mathrm{M}^{-1}\mathrm{P}N$, which implies by \eqref{eq:Pd=S2} that $$S(2)=\mathrm{P}N=(\mathcal{S}_2(2),\mathcal{S}_4(2),\ldots,\mathcal{S}_{2\varsigma}(2))^{\top}.$$
Thus, it follows that Proposition \ref{zydl} extends to $k=2$ in the sense that the spectral moment $\mathrm{S}_d(\beta)=\sum_{i=1}^{\varsigma}{2\mu_i(2)\sigma^{d}_i}$ of the pseudo-characteristic function equals ${\mathcal{S}}_d(2)$, for $d$ even and $d \leq 2\varsigma$ (and note that both equal $0$ for $d$ odd).

\begin{pro}\label{prop:Pdmoments}
Let $G$ be a graph and $d$ be even with $d \leq 2\varsigma$. Then the number of parity-closed walks in $G$ equals $P_d=\sum_{i=1}^{\varsigma}{2\mu_i(2)\sigma^{d}_i}$.
\end{pro}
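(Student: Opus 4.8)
The plan is to observe that Proposition \ref{prop:Pdmoments} is essentially a free corollary of what has already been assembled in the paragraphs immediately preceding it, so the ``proof'' is really just a matter of chaining together three identities. First I would recall the definition $\mu(2)=\frac{1}{2}\mathrm{M}^{-1}\mathrm{P}N$ and rearrange it as $2\mathrm{M}\mu(2)=\mathrm{P}N$. Reading off the $\ell$-th row of this vector equation (for $1 \leq \ell \leq \varsigma$) gives $\sum_{i=1}^{\varsigma}2\mu_i(2)\sigma_i^{2\ell}=(\mathrm{P}N)_\ell$, which is exactly the claimed right-hand side with $d=2\ell$.

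Next I would identify the left-over quantity $(\mathrm{P}N)_\ell$. By the definition of the parity-closed walk matrix $\mathrm{P}$ and the subgraph number vector $N$, we have $(\mathrm{P}N)_\ell=\sum_{i=1}^{\chi}p_{2\ell}(\widehat{G}_i)N_G(\widehat{G}_i)$. Using the remark already made in the text that $p_{2\ell}(\widehat{G}_i)=0$ whenever $|E(\widehat{G}_i)|>\ell$, this sum is supported on those motifs lying in $\mathcal{G}(\ell)$, so it coincides with $\sum_{\widehat{G}\in\mathcal{G}(\ell)}p_{2\ell}(\widehat{G})N_G(\widehat{G})$, which is precisely $\mathcal{S}_{2\ell}(2)$ by \eqref{shizi1.1} evaluated at $k=2$ (equivalently, by \eqref{eq:Pd=S2} with $d=2\ell$). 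Finally, \eqref{eq:Pd=S2} itself states $\mathcal{S}_d(2)=P_d$, so putting $d=2\ell$ together with the two displayed identities yields $P_d=\sum_{i=1}^{\varsigma}2\mu_i(2)\sigma_i^d$ for every even $d\leq 2\varsigma$; for odd $d$ both sides vanish (the left because a parity-closed walk has even length, the right because the $\sigma_i$ appear only through even powers in $\mathrm{M}$, or simply because the statement restricts to even $d$).

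There is essentially no obstacle here: the only thing to be careful about is bookkeeping the index conversion $d=2\ell$ and making sure the range $\ell\in[\varsigma]$ (which is what the square matrix $\mathrm{M}$ and the vectors $S(2)$, $\mathrm{P}N$ encode) matches the stated hypothesis $d\leq 2\varsigma$. One could alternatively phrase the whole argument in one line as: the relation $S(2)=\mathrm{P}N$ combined with $\mathrm{P}N=(\mathcal{S}_2(2),\ldots,\mathcal{S}_{2\varsigma}(2))^\top$ and $\mathcal{S}_{2\ell}(2)=P_{2\ell}$ shows that the $\ell$-th entry of $S(2)$ is $P_{2\ell}$, while by construction that same entry equals $\sum_{i=1}^{\varsigma}2\mu_i(2)\sigma_i^{2\ell}$. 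I would present it as the short chain of equalities rather than invoke any new machinery.
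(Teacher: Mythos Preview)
Your proposal is correct and follows essentially the same argument as the paper: the proposition is stated without a separate proof environment precisely because the paragraph immediately preceding it already establishes $2\mathrm{M}\mu(2)=\mathrm{P}N$ from the definition of $\mu(2)$, identifies $(\mathrm{P}N)_\ell$ with $\mathcal{S}_{2\ell}(2)=P_{2\ell}$ via \eqref{eq:Pd=S2}, and reads off the $\ell$-th row. Your write-up simply makes these steps explicit, which is fine.
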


Because $P_d=2^{-|E|}\sum_{\pi \in \Pi}{\mathrm{S}_d(G_{\pi})}$ by Theorem \ref{xindingli1}, we will next show that the multiplicity $\mu_i(2)$ is the average multiplicity of $\pm |\sigma_i|$ over all signed graphs on $G$, and obtain the following.

\begin{thm}\label{jieshao1}
The pseudo-characteristic function $\beta(\lambda)$ is the geometric mean of the characteristic polynomials of all signed graphs with underlying graph $G$,
i.e.,
\begin{align*}
  \beta(\lambda)=\prod_{ \pi \in \Pi} {\phi_{\pi}(\lambda)}^{2^{-|E|}}.
 \end{align*}
Moreover, $\beta(\lambda)$ is the characteristic polynomial of $G$ if and only if $G$ is a forest.
\end{thm}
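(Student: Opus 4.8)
The plan is to prove the two assertions of Theorem \ref{jieshao1} separately. For the identity $\beta(\lambda)=\prod_{\pi\in\Pi}\phi_\pi(\lambda)^{2^{-|E|}}$, I would argue that both sides are (generalized) products of the same factors $\lambda$ and $(\lambda^2-\sigma_i^2)$, so it suffices to match exponents. First I would observe that, by Lemma \ref{xiangyi} and its context, every nonzero eigenvalue of every signed subgraph of $G$ arises as $\pm|\sigma_i|$ for some $i$, and that $\Sigma$ is exactly the set of squares of nonzero eigenvalues of signed subgraphs; for the purpose of this theorem we only need signed graphs on $G$ itself (full edge set), and since a signed graph on $G$ is bipartite-symmetric exactly when $G$ is, its spectrum is symmetric about $0$ or at least contributes $\pm|\sigma_i|$ in a way that groups into factors $(\lambda^2-\sigma_i^2)$. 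Write $m_\pi(\sigma_i^2)$ for the combined multiplicity of the eigenvalues $|\sigma_i|$ and $-|\sigma_i|$ in $G_\pi$ (so that $\phi_\pi(\lambda)=\lambda^{n_\pi}\prod_i(\lambda^2-\sigma_i^2)^{?}$ — care is needed here because $\phi_\pi$ itself need not be even unless we pair $+$ and $-$ eigenvalues, which in general requires bipartiteness). The clean route is: $\prod_\pi \phi_\pi(\lambda)^{2^{-|E|}}=\exp\!\big(2^{-|E|}\sum_\pi \log\phi_\pi(\lambda)\big)$, and one knows (e.g. via Newton's identities) that $\log\phi_\pi$ is determined by the spectral moments $\mathrm{S}_d(G_\pi)$. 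By Theorem \ref{xindingli1}, $2^{-|E|}\sum_\pi \mathrm{S}_d(G_\pi)=P_d$, which by Proposition \ref{prop:Pdmoments} equals $\sum_i 2\mu_i(2)\sigma_i^d$ for even $d\le 2\varsigma$ (and $0$ for odd $d$) — exactly the moments of $\beta(\lambda)$ by its definition. Hence the geometric mean of the $\phi_\pi$ and $\beta$ have the same "spectral moments" of all orders up to $2\varsigma$, and since both are products over the finitely many factors $\lambda,(\lambda^2-\sigma_i^2)$ whose exponents are uniquely pinned down by those first $\varsigma$ even moments (the Vandermonde matrix $\mathrm{M}$ being invertible), the two functions coincide.

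The technical care I anticipate is in making "geometric mean equals exp of average of logs of factored polynomials" rigorous when some $\phi_\pi$ may fail to be an even polynomial, i.e., when $G$ is non-bipartite; then $G_\pi$ can have, say, eigenvalue $|\sigma_i|$ but not $-|\sigma_i|$. The resolution is that the product is over all $\pi\in\Pi$, and one uses the symmetry $\pi\leftrightarrow(-\pi)$ on $\Pi$: negating all signs sends $A(G_\pi)\mapsto -A(G_\pi)$ only if $G$ is... no — rather, one pairs $\pi$ with the sign function obtained by switching, so the global product is invariant under spectrum-reversal-in-aggregate. Concretely, $\prod_\pi\phi_\pi(\lambda)=\prod_\pi\phi_\pi(-\lambda)\cdot(-1)^{(\text{stuff})}$ because $\{G_\pi:\pi\in\Pi\}=\{G_{-\pi}:\pi\in\Pi\}$ and $\phi_{-\pi}(\lambda)=(-1)^{n}\phi_\pi(-\lambda)$ (reversing signs reverses the spectrum). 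So the aggregate product $\prod_\pi\phi_\pi(\lambda)$ is, up to sign, an even function of $\lambda$, hence a polynomial in $\lambda^2$ times a power of $\lambda$, and its $2^{-|E|}$-th root is $\lambda^{\mu_0(2)}\prod_i(\lambda^2-\sigma_i^2)^{\mu_i(2)}$ once the exponents are identified via the moment argument above. I expect \textbf{this bipartite/non-bipartite bookkeeping and the justification that matching the first $\varsigma$ even moments forces equality of the factored forms} to be the main obstacle; everything else is assembling Theorems \ref{xindingli1}, \ref{pro5.1}, and Proposition \ref{prop:Pdmoments}.

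For the second statement — $\beta(\lambda)$ is the characteristic polynomial of $G$ if and only if $G$ is a forest — I would argue in both directions. If $G$ is a forest, then $G$ is bipartite with no cycles, so every signed graph on $G$ is switching equivalent to $G_+$ (since every sign function on a forest is balanced: there are no cycles to obstruct balance). Hence $\phi_\pi(\lambda)=\phi_+(\lambda)=\phi_G(\lambda)$ for all $\pi$, and the geometric mean is trivially $\phi_G(\lambda)$; moreover this matches the known fact (it also follows from $\mathcal S_d(2)=P_d$ and that on a forest every closed walk is parity-closed, so $P_d=\mathrm{S}_d(G)$) that $\beta=\phi_G$. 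Conversely, suppose $\beta(\lambda)=\phi_G(\lambda)$. Then in particular the "spectral moments" agree: $P_d=\mathrm{S}_d(G)$ for all even $d$. But $P_d$ counts parity-closed closed walks of length $d$, which is at most the number $\mathrm{S}_d(G)=\mathrm{trace}(A_G^d)$ of all closed walks of length $d$, with equality for all $d$ iff every closed walk in $G$ is parity-closed. A graph in which every closed walk uses every edge an even number of times has no cycle (a cycle of length $g$ traversed once is a closed walk using its edges once each), hence $G$ is a forest. I would spell out: if $G$ contains a cycle $C$ of length $g$, then traversing $C$ once is a closed walk of length $g$ that is not parity-closed, so $P_g<\mathrm{S}_g(G)$ (using that both count the same closed walks except at least this one is excluded from $P_g$), contradicting $\beta=\phi_G$. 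So $G$ is acyclic, i.e., a forest. The only subtlety is that the equality $\beta=\phi_G$ as functions must be converted to equality of all spectral moments, which is immediate since $\phi_G$ is a genuine polynomial and $\beta$ is then too, and characteristic polynomials are determined by — and determine — their power sums; alternatively evaluate the already-established identity $\beta=\prod_\pi\phi_\pi^{2^{-|E|}}$ together with $P_d=2^{-|E|}\sum_\pi\mathrm{S}_d(G_\pi)\le \mathrm{S}_d(G)$, with equality in the last step (forced by $\beta=\phi_G$) only when every $G_\pi$ has the same spectrum as $G_+$, which again forces all sign functions to be balanced and hence $G$ a forest.
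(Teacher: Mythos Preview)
Your approach is essentially the same as the paper's: match the exponents $\mu_i(2)$ with the average multiplicities $\overline{\mu}_i$ via the moment identity $P_d = 2^{-|E|}\sum_\pi \mathrm{S}_d(G_\pi)$ (Theorem~\ref{xindingli1}) and the invertibility of $\mathrm{M}$, then invoke Newton's identities, and deduce the forest characterization from $P_d = \mathrm{S}_d(G)$ forcing every closed walk to be parity-closed. Your worry about the non-bipartite case is resolved exactly as you suspect---the involution $\pi \mapsto -\pi$ on $\Pi$ (which always gives $A(G_{-\pi})=-A(G_\pi)$, so your hesitation there is unwarranted) shows that $|\sigma_i|$ and $-|\sigma_i|$ have equal average multiplicity---and the paper dispatches it in a single sentence.
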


\begin{proof}
Note first that $|\sigma_i|$ and $-|\sigma_i|$ have the same average multiplicity over all signed graphs as sign functions $\pi$ and $-\pi$ give opposite eigenvalues.
Let $\overline{\mu}_i$ then be the average multiplicity of $\pm |\sigma_i|$ over all signed graphs on $G$ and let $\overline{\mu}$ be the corresponding vector. Because $P_d=2^{-|E|}\sum_{\pi \in \Pi}{\mathrm{S}_d(G_{\pi})}$ by Theorem \ref{xindingli1}, it is clear that $\mathrm{P}N=2\mathrm{M}\overline{\mu}$. Because $2\mathrm{M}\mu(2)=S(2)=\mathrm{P}N$ and $M$ is invertible, it follows that $\overline{\mu}=\mu(2)$.

Now it follows that $\beta(\lambda)^{2^{|E|}}$ is a monic polynomial with the same spectral moments as $\prod_{ \pi \in \Pi} \phi_{\pi}(\lambda)$.
From Newton's identities, also known as the Girard-Newton formulae, it is known that a monic polynomial is determined by its spectral moments
\cite{eidswick1968proof,kalman2000matrix}, which finishes the proof of the main statement.

We are left to determine when $\beta(\lambda)$ is the characteristic polynomial $\phi(G)$ of $G$. Note first that if $G$ is a forest, then ${\phi_{\pi}(\lambda)}=\phi(G)$ for all $\pi \in \Pi$, and hence $\beta(\lambda)=\prod_{ \pi \in \Pi} {\phi_{\pi}(\lambda)}^{2^{-|E|}}=\phi(G)$.

On the other hand, if $\beta(\lambda)=\phi(G)$, then their spectral moments are the same, hence $P_d=S_d(G)$ for every $d$, which implies that every closed walk is a parity-closed walk. But then $G$ cannot have cycles, so it is a forest.

Thus, $\beta(\lambda)=\phi(G)$ if and only if $G$ is a forest.
\end{proof}

\begin{cor}
Let $G$ be a cycle. Then $\beta(\lambda)=\phi(\lambda^2-2)^{\frac{1}{2}}$.
\end{cor}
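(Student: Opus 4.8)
The plan is to apply Theorem~\ref{jieshao1} directly, together with the well-known spectrum of the cycle $C_n$ and of its signed variant. Recall that for the cycle on $n$ vertices, the balanced signed graph $G_+$ has eigenvalues $2\cos(2\pi j/n)$ for $j=0,1,\ldots,n-1$, while $G_-$ (the unbalanced sign, since a single negative edge on a cycle is the unique switching class of unbalanced signings) has eigenvalues $2\cos((2j+1)\pi/n)$. For a cycle, switching classes are determined entirely by the parity of the number of negative edges, so there are exactly two switching classes: the balanced one (achieved by $2^{n-1}$ sign functions) and the unbalanced one (the other $2^{n-1}$). Hence the geometric mean over all $2^n$ sign functions reduces to $\phi_{+}(\lambda)^{1/2}\phi_{-}(\lambda)^{1/2}$, where $\phi_{+}$ and $\phi_{-}$ are the characteristic polynomials of the balanced and unbalanced signed cycle respectively.

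Next I would identify $\phi_{+}(\lambda)\phi_{-}(\lambda)$ explicitly. The key computation is the classical factorization: the eigenvalues of $G_+$ together with those of $G_-$ are exactly the numbers $2\cos(\pi j/n)$ for $j=0,1,\ldots,2n-1$, i.e.\ $\{2\cos\theta : e^{in\theta}=\pm 1\}$. Equivalently, $\phi_{+}(\lambda)\phi_{-}(\lambda)=\prod_{j=0}^{2n-1}(\lambda-2\cos(\pi j/n))$, and this product telescopes via the substitution $\lambda=z+z^{-1}$ into something governed by $z^{2n}-1$ after accounting for the $z\leftrightarrow z^{-1}$ symmetry. Carrying this out, one finds $\phi_{+}(\lambda)\phi_{-}(\lambda)=\bigl(\phi(\lambda^2-2)\bigr)$ where $\phi$ here denotes the characteristic polynomial of the cycle itself, since $\lambda=2\cos\theta$ gives $\lambda^2-2=2\cos(2\theta)$, and doubling the angle is precisely the transformation that sends the $2n$-th roots of unity picture down to the $n$-th roots of unity picture underlying $\phi(C_n)$. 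Therefore $\beta(\lambda)=\bigl(\phi_{+}(\lambda)\phi_{-}(\lambda)\bigr)^{1/2}=\phi(\lambda^2-2)^{1/2}$, matching the claimed formula.

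An alternative and perhaps cleaner route, avoiding trigonometric bookkeeping, is to work directly with the companion-type identity for circulant matrices: if $A_+$ and $A_-$ are the adjacency matrices of the balanced and unbalanced signed cycle, then $A_+ = C + C^{-1}$ and $A_- = C' + (C')^{-1}$ where $C$ is the cyclic permutation matrix and $C'$ is the "anti-cyclic" permutation (with one $-1$), satisfying $C^n=I$ and $(C')^n=-I$. Then $\det(\lambda I - A_+)\det(\lambda I - A_-)$ can be assembled from the factorizations of $\lambda I - C - C^{-1}$ over the eigenvalues of $C$ and $C'$, whose union is the full set of $2n$-th roots of unity. Substituting $\lambda^2-2 = (z+z^{-1})^2-2 = z^2+z^{-2}$ then directly exhibits the product as $\phi(\lambda^2-2)$, the characteristic polynomial of the cycle evaluated at $\lambda^2-2$.

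The main obstacle I anticipate is purely notational: being careful that ``$\phi$'' in the statement denotes the characteristic polynomial of the cycle $G$ (consistent with the notation $\phi(G)$ used in Theorem~\ref{jieshao1}), and correctly handling the degenerate eigenvalue $2\cos 0 = 2$ and (when $n$ is even) $-2$, which occur with the right multiplicities so that the factor counts on both sides agree — there are $n$ eigenvalues on each side and $2n$ in the combined product, while $\phi(\lambda^2-2)$ has degree $2n$, so the bookkeeping is consistent. No deep idea is needed beyond Theorem~\ref{jieshao1} and the Chebyshev-type angle-doubling identity $2\cos(2\theta) = (2\cos\theta)^2 - 2$; the work is in verifying that the two switching classes exhaust $\Pi$ with equal weight and that the eigenvalue multisets line up.
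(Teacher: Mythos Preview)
Your proposal is correct and follows essentially the same route as the paper: apply Theorem~\ref{jieshao1}, observe that the signed cycle has exactly two switching classes (balanced and unbalanced) of equal size so that $\beta(\lambda)^2=\phi_+(\lambda)\phi_-(\lambda)$, and then use the cosine eigenvalue formulas together with the angle-doubling identity $4\cos^2\theta = 2 + 2\cos 2\theta$ to identify the product as $\phi(\lambda^2-2)$. Your circulant-matrix alternative is a nice repackaging of the same trigonometric computation, but is not needed.
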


\begin{proof}
A signed cycle $(C_n, {\pi})$ is called positive (resp. negative) if the product of  signs of all edges of $(C_n, {\pi})$ is positive (resp. negative). Note that all positive (resp. negative) cycles are switching equivalent.
We use $\phi_+(\lambda)$ and  $\phi_-(\lambda)$ to denote the characteristic polynomial of a positive and negative cycle, respectively.
It is known that $\phi_+(\lambda) = \prod_{i=1}^{n} (\lambda-2\cos\frac{2i\pi}{n})$ \cite[p.~72]{cvetkovic1980spectra} and $\phi_-(\lambda) =\prod_{i=1}^{n} (\lambda-2\cos\frac{(2i-1)\pi}{n})$ \cite[Lemma 2.3]{fan2004largest}.
From Theorem \ref{jieshao1}, we have
\begin{align*}
  \beta^2(\lambda)&=\phi_+(\lambda)\phi_-(\lambda)
  =\prod_{i=1}^{2n}(\lambda-2\cos\tfrac{i\pi}{n})
  =\prod_{i=1}^{n}(\lambda^2-4\cos^2\tfrac{i\pi}{n}) \\
  &=\prod_{i=1}^{n}(\lambda^2-2-2\cos\tfrac{2i\pi}{n})=\phi(\lambda^2-2),
\end{align*}
where we have used the well-known identities $\cos\theta=-\cos(\theta-\pi)$ and $2\cos^2\theta=1+\cos2\theta$.
\end{proof}

For example, $\beta(\lambda)=(\lambda^2-1)(\lambda^2-4)^{\frac{1}{2}}$ for $G=C_3$, and note that this is not a polynomial.
In fact, when $G$ is connected, but not a tree, then $\beta(\lambda)$ is not a polynomial because the multiplicity of the spectral radius is not an integer. Indeed, note that if $G$ is connected, then its spectral radius has multiplicity $1$, and hence
$G_{\pi}$ has at most one eigenvalue $\rho(G)$ for all sign functions $\pi$.
If $G$ is not a tree, then there is a sign function $\pi$ such that $G_{\pi}$ is not switching equivalent to $G_{+}$  and hence, by Lemma \ref{yinlipubanjing}, $G_{\pi}$ does not have eigenvalue $\rho(G)$.
Thus, the average multiplicity of eigenvalue $\rho(G)$ over all sign functions is less than $1$, and hence $\beta(\lambda)$ is not a polynomial. In the next section, we will make this more precise and determine the multiplicity of $\rho(G)$ as a root of $\beta$.

From the arithmetic-geometric mean inequality, we can get an inequality between the matching polynomial $\alpha(\lambda)$ and the pseudo-characteristic function $\beta(\lambda)$ of $G$.

\begin{cor}\label{cor6.2}
Let $\lambda_0$ be a real number such that $\phi_{\pi}(\lambda_0) \geq 0$ for all $\pi \in \Pi$.  Then $\alpha(\lambda_0) \geq \beta(\lambda_0)$, with equality if and only if $\phi_{\pi}(\lambda_0)$ is constant over all $\pi \in \Pi$.
\end{cor}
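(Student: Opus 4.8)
The plan is to obtain Corollary~\ref{cor6.2} as an immediate consequence of Theorem~\ref{jieshao1}, the Godsil--Gutman identity recalled in the introduction, and the elementary arithmetic--geometric mean inequality. Concretely, I would apply AM--GM to the family of $2^{|E|}$ real numbers $\{\phi_{\pi}(\lambda_0) : \pi \in \Pi\}$, which are nonnegative by hypothesis. By the result of Godsil and Gutman, the arithmetic mean $2^{-|E|}\sum_{\pi \in \Pi}\phi_{\pi}(\lambda_0)$ of this family equals $\alpha(\lambda_0)$, while by Theorem~\ref{jieshao1} their geometric mean $\prod_{\pi \in \Pi}\phi_{\pi}(\lambda_0)^{2^{-|E|}}$ equals $\beta(\lambda_0)$. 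The AM--GM inequality then gives $\alpha(\lambda_0) \geq \beta(\lambda_0)$ at once.

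For the equality statement, I would invoke the equality case of AM--GM: for nonnegative reals the arithmetic and geometric means coincide if and only if all the numbers are equal. Hence $\alpha(\lambda_0) = \beta(\lambda_0)$ exactly when $\phi_{\pi}(\lambda_0)$ takes one and the same value for all $\pi \in \Pi$, which is the asserted condition. I would also remark that the degenerate case causes no difficulty: if $\phi_{\pi}(\lambda_0) = 0$ for some $\pi$, then $\beta(\lambda_0) = 0$, the inequality reduces to $\alpha(\lambda_0) \geq 0$ (true since all summands are nonnegative), and equality then forces every $\phi_{\pi}(\lambda_0)$ to vanish, i.e.\ $\phi_{\pi}(\lambda_0)$ is again constant over $\Pi$.

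There is essentially no obstacle here; the only point requiring a line of care is to cite AM--GM in the form that includes its equality characterization for possibly-zero arguments, which is entirely standard. The corollary is really a packaging of the arithmetic-mean (Godsil--Gutman) and geometric-mean (Theorem~\ref{jieshao1}) descriptions of $\alpha$ and $\beta$.
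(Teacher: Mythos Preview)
Your proposal is correct and matches the paper's own approach: the paper does not give a separate proof environment for this corollary but simply remarks, just before stating it, that the inequality follows from the arithmetic--geometric mean inequality applied to the expressions for $\alpha$ (Godsil--Gutman) and $\beta$ (Theorem~\ref{jieshao1}). Your write-up fills in exactly these details, including the equality case, and nothing more is needed.
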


Godsil and Gutman \cite{Godsil1978onthematching} showed that the matching polynomials $\alpha$ of some standard graphs can be expressed in terms of some classical orthogonal polynomials. For example $\alpha_{C_n}(\lambda)=2T_n(\lambda /2)$, where $T_n$ is a Chebyshev polynomial of the first kind. For the complete graphs $K_n$, they obtained that $\alpha_{K_n}(\lambda)={He}_n(\lambda)$, where ${He}_n$ denotes the probabilist's Hermite polynomial. Thus, Corollary \ref{cor6.2} implies that $\beta_{K_n}(\lambda_0)\leq {He}_n(\lambda_0)$, for example.

\section{The multiplicity of the spectral radius of a power hypergraph}\label{sec5}

Let $n_{\rho}(G^{(k)})$
be the total multiplicity of eigenvalues of the power hypergraph $G^{(k)}$ whose modulus is  equal to the spectral radius $\rho(G^{(k)})$.
We note that it follows from Theorem \ref{pro5.1} that this number is $k$ times the multiplicity of the spectral radius.

\begin{lem}\label{yinli1}
Let $k \geq 3$ and let $G$ be a connected graph. Then
\begin{align*}
  n_{\rho}(G^{(k)})=2^{|E|-|V|}k^{|E|(k-3)+|V|} \lim_{\ell\rightarrow \infty}{\frac{{P}_{2\ell}}{\rho(G)^{2\ell}}}.
\end{align*}
\end{lem}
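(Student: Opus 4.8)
The plan is to express $n_{\rho}(G^{(k)})$ as a limit of normalised spectral moments of $G^{(k)}$, then rewrite those moments via Proposition~\ref{zydl} and isolate the single dominant term as $\ell\to\infty$. First I would note that, since $G$ is connected, $\rho(G)^{2}$ is the (unique) largest element of $\Sigma$: any nonzero eigenvalue $\sigma$ of a signed subgraph $H_{\pi}$ satisfies $\sigma^{2}\le\rho(H_{\pi})^{2}\le\rho(G)^{2}$ by Lemma~\ref{yinlipubanjing} and monotonicity of the spectral radius under taking subgraphs, while $\rho(G)^{2}$ itself is realised by $G_{+}$; write $\sigma_{j}^{2}=\rho(G)^{2}$. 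By Lemma~\ref{xiangyi} this means $\rho(G^{(k)})^{k}=\rho(G)^{2}$, and the multiplicity of $\rho(G^{(k)})$ as a root of $\phi_{k}(\lambda)$ is exactly $\mu_{j}(k)$ (it is a root of the factor $\lambda^{k}-\sigma_{j}^{2}$ and of no other factor in \eqref{michaotuyinli}), so $n_{\rho}(G^{(k)})=k\mu_{j}(k)$. Combining this with the identity $\mathrm{S}_{\ell k}(G^{(k)})=\sum_{i=1}^{\varsigma}k\mu_{i}(k)\sigma_{i}^{2\ell}$ from Section~\ref{sec:charpoly} and dividing by $\rho(G)^{2\ell}$ gives, since $\sigma_{i}^{2}<\rho(G)^{2}$ for $i\neq j$,
$$n_{\rho}(G^{(k)})=\lim_{\ell\to\infty}\frac{\mathrm{S}_{\ell k}(G^{(k)})}{\rho(G)^{2\ell}}.$$

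Next I would substitute $\mathrm{S}_{\ell k}(G^{(k)})=\mathcal{S}_{\ell k}(k)$ from Proposition~\ref{zydl} and unfold \eqref{shizi1.1}. For $\ell\ge|E|$ the set $\mathcal{G}(\ell)$ is the fixed finite set of all connected subgraph motifs of $G$, and $\mathcal{S}_{\ell k}(k)=\sum_{\widehat{G}}c_{\widehat{G}}\,p_{2\ell}(\widehat{G})N_{G}(\widehat{G})$ with coefficients $c_{\widehat{G}}$ independent of $\ell$; in particular $c_{G}=2^{|E|-|V|}k^{|E|(k-3)+|V|}$ and $N_{G}(G)=1$. The key estimate is that for every connected proper subgraph $\widehat{G}$ of $G$ one has $p_{2\ell}(\widehat{G})\le\mathrm{S}_{2\ell}(\widehat{G})\le|V(\widehat{G})|\,\rho(\widehat{G})^{2\ell}$ (a covering parity-closed walk is in particular a closed walk, and closed walks of length $2\ell$ are counted by $\mathrm{tr}\,A(\widehat{G})^{2\ell}$), while $\rho(\widehat{G})<\rho(G)$ because $G$ is connected; hence $p_{2\ell}(\widehat{G})/\rho(G)^{2\ell}\to0$. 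Applying the same decomposition to $P_{2\ell}=\sum_{\widehat{G}}p_{2\ell}(\widehat{G})N_{G}(\widehat{G})$ from \eqref{eq:Pd=S2} (with $d=2\ell$), only the term $\widehat{G}=G$ survives in the limit, and $\lim_{\ell}P_{2\ell}/\rho(G)^{2\ell}$ exists because $P_{2\ell}$ admits a spectral expansion via Theorem~\ref{xindingli1}. Thus $\lim_{\ell}P_{2\ell}/\rho(G)^{2\ell}=\lim_{\ell}p_{2\ell}(G)/\rho(G)^{2\ell}$, and the identical argument applied to $\mathcal{S}_{\ell k}(k)$ yields $\lim_{\ell}\mathcal{S}_{\ell k}(k)/\rho(G)^{2\ell}=2^{|E|-|V|}k^{|E|(k-3)+|V|}\lim_{\ell}p_{2\ell}(G)/\rho(G)^{2\ell}$. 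Putting this together with the displayed limit for $n_{\rho}(G^{(k)})$ finishes the proof.

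The step I expect to be the main obstacle is the second one: justifying that, asymptotically, only the motif $\widehat{G}=G$ contributes to both $\mathcal{S}_{\ell k}(k)$ and $P_{2\ell}$. This hinges on the strict inequality $\rho(\widehat{G})<\rho(G)$ for connected proper subgraphs — the standard Perron--Frobenius strict monotonicity for connected graphs, and the one place where connectedness of $G$ is genuinely used — and on correctly reading off the coefficient $2^{|E|-|V|}k^{|E|(k-3)+|V|}$ of the surviving term from \eqref{shizi1.1}. Once these two points are settled, the remainder is a routine comparison of limits.
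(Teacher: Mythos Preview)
Your proposal is correct and follows the same overall route as the paper: write $n_{\rho}(G^{(k)})$ as $\lim_{\ell}\mathrm{S}_{\ell k}(G^{(k)})/\rho(G)^{2\ell}$, expand via Proposition~\ref{zydl}, and show that only the motif $\widehat{G}=G$ survives in the limit, so that the answer reduces to the coefficient of $p_{2\ell}(G)$, which you correctly read off as $2^{|E|-|V|}k^{|E|(k-3)+|V|}$.

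The one noteworthy difference is in how the proper-subgraph contributions are killed. You use the direct elementary estimate $p_{2\ell}(\widehat{G})\le \mathrm{tr}\,A(\widehat{G})^{2\ell}\le |V(\widehat{G})|\,\rho(\widehat{G})^{2\ell}$ together with $\rho(\widehat{G})<\rho(G)$. The paper instead argues indirectly: it applies the very same limit formula to $n_{\rho}(H^{(k)})$ for a proper subgraph $H$, observes that this is finite with denominator $\rho(H)^{2\ell}$, and hence vanishes when the denominator is replaced by $\rho(G)^{2\ell}$; nonnegativity of all summands then forces each individual $p_{2\ell}(\widehat{G})/\rho(G)^{2\ell}\to 0$. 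Your bound is cleaner and self-contained; the paper's device avoids appealing to the trace inequality but is a touch more circuitous. Either way the identification $\lim_{\ell}P_{2\ell}/\rho(G)^{2\ell}=\lim_{\ell}p_{2\ell}(G)/\rho(G)^{2\ell}$ then follows from \eqref{eq:Pd=S2} exactly as you say.
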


\begin{proof}
It is known that the spectral radius of a power hypergraph $G^{(k)}$ is a positive number and $\rho(G^{(k)})^k=\rho(G)^2$
\cite{Zhou2014Some} (this follows also from Lemmas \ref{xiangyi} and \ref{yinlipubanjing}).
Since the $\ell k$-th order spectral moment $\mathrm{S}_{\ell k}(G^{(k)})$ is the sum of the $\ell k$-th powers of all eigenvalues of a power hypergraph $G^{(k)}$,
it follows that
\begin{align*}
  n_{\rho}(G^{(k)})=\lim_{\ell\rightarrow \infty}{\frac{\mathrm{S}_{\ell k}(G^{(k)})}{\rho(G^{(k)})^{\ell k}}}=\lim_{\ell\rightarrow \infty}{\frac{\mathrm{S}_{\ell k}(G^{(k)})}{\rho(G)^{2\ell}}}.
\end{align*}
By Proposition \ref{zydl}, we then have
\begin{align}\label{pubanjingshizi1}
  n_{\rho}(G^{(k)})=\lim_{\ell\rightarrow \infty}\frac{\sum_{\widehat{G} \in \mathcal{G}(\ell)}l_{G}(\widehat{G})p_{2\ell}(\widehat{G})}{\rho(G)^{2\ell}},
\end{align}
where $l_G(\widehat{G})={2^{|E(\widehat{G})|-|V(\widehat{G})|}{(k-1)^{|V(G^{(k)})|-|V(\widehat{G}^{(k)})|}} k^{|E(\widehat{G})|(k-3)+|V(\widehat{G})|}}{N_{G}(\widehat{G})}$.
We claim that in the summation over $\mathcal{G}(\ell)$, only $G$ itself contributes to the limit in \eqref{pubanjingshizi1}.
Indeed, for any graph ${H} \in \mathcal{G}(\ell)$ with $H \neq G$, let $\mathcal{H}(\ell)$ be the set of connected subgraphs (motifs) of the graph $H$ with at most $\ell$ edges.
By  \eqref{pubanjingshizi1}, we have
$$n_{\rho}(H^{(k)})=\lim_{\ell\rightarrow \infty}\frac{\sum_{\widehat{G} \in \mathcal{H}(\ell)}l_{H}(\widehat{G})p_{2\ell}(\widehat{G})}{\rho(H)^{2\ell}},$$
which implies that
$$
\lim_{\ell\rightarrow \infty}\frac{\sum_{\widehat{G} \in \mathcal{H}(\ell)}l_{H}(\widehat{G})p_{2\ell}(\widehat{G})}{\rho(G)^{2\ell}}=0,
$$
because $\rho(H)<\rho(G)$.

Note that $l_{H}(\widehat{G}){p_{2\ell}(\widehat{G})} \geq 0$ and $l_{H}(\widehat{G})>0$ for any ${\widehat{G} \in \mathcal{H}(\ell)}$ and any $H \in \mathcal{G}(\ell)$  ($H \neq G$), hence it follows that
\begin{align*}
\lim_{\ell\rightarrow \infty}\frac{p_{2\ell}(\widehat{G})}{\rho(G)^{2\ell}}=0
\end{align*}
for any ${\widehat{G} \in \mathcal{G}(\ell)}$ with ${\widehat{G} \neq G}$.
By \eqref{pubanjingshizi1}, we thus have
\begin{align*}
 n_{\rho}(G^{(k)})&={2^{|E|-|V|} k^{|E|(k-3)+|V|}}\lim_{\ell\rightarrow \infty}\frac{p_{2\ell}({G})}{\rho(G)^{2\ell}}\\
&={2^{|E|-|V|} k^{|E|(k-3)+|V|}}\lim_{\ell\rightarrow \infty}\frac{\sum_{\widehat{G} \in \mathcal{G}(\ell)}p_{2\ell}(\widehat{G}){N_{G}(\widehat{G})}}{\rho(G)^{2\ell}}\\
&=2^{|E|-|V|}k^{|E|(k-3)+|V|} \lim_{\ell\rightarrow \infty}{\frac{{P}_{2\ell}}{\rho(G)^{2\ell}}}.
\end{align*}

\end{proof}

From Theorem \ref{xindingli1}, we have that ${{P}_{2\ell}}=2^{-|E|}\sum_{\pi \in \Pi}{\mathrm{S}_{2\ell}(G_{\pi})}$.
Using Lemma \ref{yinlipubanjing} about the spectral radius of signed graphs, we obtain $\lim_{\ell\rightarrow \infty}{\frac{{P}_{2\ell}}{\rho(G)^{2\ell}}}$ and determine the multiplicity of the spectral radius of $G^{(k)}$.

\begin{thm}\label{jieshao2}
For a connected graph $G$ and $k\geq 3$,
the multiplicity of the spectral radius of $G^{(k)}$ is $k^{|E|(k-3)+|V|-1}$.
\end{thm}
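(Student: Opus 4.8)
The plan is to combine Lemma \ref{yinli1} with Theorem \ref{xindingli1} and then evaluate the resulting limit by a switching-class count. By the remark preceding the theorem, $n_{\rho}(G^{(k)})$ equals $k$ times the multiplicity of the spectral radius of $G^{(k)}$, so it suffices to show $n_{\rho}(G^{(k)})=k^{|E|(k-3)+|V|}$. By Lemma \ref{yinli1} this in turn reduces to proving
\[
\lim_{\ell\rightarrow\infty}\frac{P_{2\ell}}{\rho(G)^{2\ell}}=2^{|V|-|E|}.
\]
Now Theorem \ref{xindingli1} gives $P_{2\ell}=2^{-|E|}\sum_{\pi\in\Pi}\mathrm{S}_{2\ell}(G_{\pi})$, and since $\Pi$ is finite we may interchange the finite sum with the limit, reducing the problem to the evaluation of $\lim_{\ell\rightarrow\infty}\mathrm{S}_{2\ell}(G_{\pi})/\rho(G)^{2\ell}$ for each fixed $\pi$.

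Next I would compute each such inner limit. The adjacency matrix of $G_{\pi}$ is real symmetric, so $\mathrm{S}_{2\ell}(G_{\pi})=\sum_j\lambda_j^{2\ell}$ is a sum of the $2\ell$-th powers of the real eigenvalues $\lambda_j$ of $G_{\pi}$, and by Lemma \ref{yinlipubanjing} each satisfies $|\lambda_j|\leq\rho(G)$. Hence $\lambda_j^{2\ell}/\rho(G)^{2\ell}\rightarrow 1$ if $|\lambda_j|=\rho(G)$ and $\rightarrow 0$ otherwise, so $\lim_{\ell\rightarrow\infty}\mathrm{S}_{2\ell}(G_{\pi})/\rho(G)^{2\ell}$ equals the total multiplicity of the eigenvalues $\pm\rho(G)$ of $G_{\pi}$. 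By Lemma \ref{yinlipubanjing}, $G_{\pi}$ has eigenvalue $\rho(G)$ exactly when $G_{\pi}$ is switching equivalent to $G_{+}$, and eigenvalue $-\rho(G)$ exactly when it is switching equivalent to $G_{-}$; moreover, as $G$ is connected, each of $\rho(G)$ and $-\rho(G)$ has multiplicity $1$ whenever it occurs. So the inner limit is $0$ unless $G_{\pi}$ is switching equivalent to $G_{+}$ or $G_{-}$.

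It then remains to count. For a connected graph the switching class of a fixed signed graph has exactly $2^{|V|-1}$ members, since the switching action of $\{\pm1\}^{V}$ fixes a given sign function precisely for the two constant vectors $\pm\mathbf{1}$. If $G$ is not bipartite, then $G_{+}\not\sim G_{-}$, the switching classes of $G_{+}$ and $G_{-}$ are disjoint, and each of the $2\cdot 2^{|V|-1}$ sign functions in their union contributes inner limit $1$. If $G$ is bipartite, then $G_{+}\sim G_{-}$, so there is a single class of $2^{|V|-1}$ sign functions, each contributing inner limit $2$ (coming from the eigenvalues $\rho(G)$ and $-\rho(G)$). In both cases $\sum_{\pi\in\Pi}\lim_{\ell\rightarrow\infty}\mathrm{S}_{2\ell}(G_{\pi})/\rho(G)^{2\ell}=2^{|V|}$, whence the displayed limit equals $2^{-|E|}\cdot 2^{|V|}=2^{|V|-|E|}$. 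Substituting into Lemma \ref{yinli1} gives $n_{\rho}(G^{(k)})=2^{|E|-|V|}k^{|E|(k-3)+|V|}\cdot 2^{|V|-|E|}=k^{|E|(k-3)+|V|}$, and dividing by $k$ yields the asserted multiplicity $k^{|E|(k-3)+|V|-1}$.

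The part I expect to require the most care is the bookkeeping in the last paragraph: one must correctly compute the size $2^{|V|-1}$ of a switching class of a connected graph and then verify that the bipartite and non-bipartite cases produce the same total $2^{|V|}$ (one class contributing $2$ versus two disjoint classes each contributing $1$). The other ingredients — interchanging a finite sum with a limit and evaluating $\lim_{\ell\rightarrow\infty}\lambda^{2\ell}/\rho(G)^{2\ell}$ — are routine once Lemmas \ref{yinlipubanjing} and \ref{yinli1} are in hand.
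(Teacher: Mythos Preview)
Your proposal is correct and follows essentially the same approach as the paper: both combine Lemma \ref{yinli1} with Theorem \ref{xindingli1}, reduce to evaluating $\lim_{\ell\to\infty}\sum_{\pi}\mathrm{S}_{2\ell}(G_{\pi})/\rho(G)^{2\ell}$, invoke Lemma \ref{yinlipubanjing} to identify the contributing $\pi$, and count the switching classes of $G_{+}$ and $G_{-}$ as having size $2^{|V|-1}$ each to obtain the total $2^{|V|}$. The only cosmetic difference is that you split into bipartite and non-bipartite cases explicitly, whereas the paper writes the limit as $|\Pi_{+}|+|\Pi_{-}|$ and remarks parenthetically that bipartiteness is irrelevant; both arrive at the same count.
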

\begin{proof}
From  Theorem \ref{xindingli1} and Lemma \ref{yinli1}, we have that
\begin{align}\label{shizichongshu}
  n_{\rho}(G^{(k)})&=2^{-|V|}k^{|E|(k-3)+|V|} \lim_{\ell\rightarrow \infty}{\frac{\sum_{\pi \in \Pi}{\mathrm{S}_d(G_{\pi})}}{\rho(G)^{2\ell}}} \notag\\
  &=2^{-|V|}k^{|E|(k-3)+|V|} \lim_{\ell\rightarrow \infty}{\frac{\sum_{\pi \in \Pi}{\sum_{\lambda_{\pi} \in \sigma(G_{\pi})}\lambda^{2\ell}_{\pi}}}{\rho(G)^{2\ell}}},
\end{align}
where $\sigma(G_{\pi})$ is the spectrum of $G_{\pi}$.

Let $\mathcal{D}$ be the set of all $|V|\times |V|$ diagonal matrices with diagonal entries $\pm 1$.
Let $\Pi_{+}$ and $\Pi_{-}$ denote the sets of sign functions $\pi \in \Pi$ such that $A(G_{\pi})=D^{-1}A(G)D$ and $\Pi_{+}=D^{-1}\left(-A(G)\right)D$ for some $D \in \mathcal{D}$, respectively. By Lemma \ref{yinlipubanjing}, $\Pi_{+}$ contains the sign functions for which $G_{\pi}$ has an eigenvalue $\rho(G)$, and $\Pi_{-}$ contains the sign functions for which $G_{\pi}$ has an eigenvalue $-\rho(G)$.

We observe that since $G$ is connected, $D^{-1}A(G)D=A(G)$ implies that $D=I$ or $D=-I$. This implies that there is a two-one correspondence between $\mathcal{D}$ and $\Pi_+$ and similarly between $\mathcal{D}$ and $\Pi_-$. Thus,
\begin{align*}
  |\Pi_+|= |\Pi_-|=\frac{|\mathcal{D}|}{2}=2^{|V|-1}.
\end{align*}
From  Lemma \ref{yinlipubanjing}, we now have that
\begin{align*}
 \lim_{\ell\rightarrow \infty}{\frac{\sum_{\pi \in \Pi}{\sum_{\lambda_{\pi} \in \sigma(G_{\pi})}\lambda^{2\ell}_{\pi}}}{\rho(G)^{2\ell}}}&=\lim_{\ell\rightarrow \infty}{\frac{\sum_{\pi \in \Pi_+}\rho(G_{\pi})^{2\ell}+\sum_{\pi \in \Pi_-}(-\rho(G_{\pi}))^{2\ell}}{\rho(G)^{2\ell}}}\\
&=|\Pi_+|+|\Pi_-|
=2^{|V|}
\end{align*}
(note that it is not relevant whether $G$ is bipartite, in which case $\Pi_+=\Pi_-$, or not).
By \eqref{shizichongshu}, we thus have $n_{\rho}(G^{(k)})=k^{|E|(k-3)+|V|}$, and hence the multiplicity of the spectral radius of the power hypergraph $G^{(k)}$ is $k^{|E|(k-3)+|V|-1}$.
\end{proof}

As a corollary of the proof, we can also extend the above result to the case $k=2$. Indeed, as $\Pi_+$ contains all sign functions $\pi$ such that $G_{\pi}$ has an eigenvalue $\rho(G)$, each with multiplicity $1$, and the multiplicity of the spectral radius $\rho(G)$ as a root of $\beta$ is the average multiplicity over all sign functions, it follows that this multiplicity equals $2^{-|E|}|\Pi_+|$.

%

\begin{pro}\label{pro:multbeta}
For a connected graph $G$,
the multiplicity of the spectral radius $\rho(G)$ as a root of $\beta$ is $2^{-|E|+|V|-1}$.
\end{pro}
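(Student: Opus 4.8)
The plan is to extract the multiplicity of $\rho(G)$ as a root of $\beta$ directly from the product representation established in Theorem~\ref{jieshao1}. Since $\beta(\lambda)=\prod_{\pi\in\Pi}\phi_\pi(\lambda)^{2^{-|E|}}$, the multiplicity of a fixed real number $\lambda_0$ as a root of $\beta$ (equivalently, the exponent of $(\lambda-\lambda_0)$ in the monic polynomial $\beta(\lambda)^{2^{|E|}}$, divided by $2^{|E|}$) equals $2^{-|E|}\sum_{\pi\in\Pi}m_\pi(\lambda_0)$, where $m_\pi(\lambda_0)$ is the multiplicity of $\lambda_0$ as a root of $\phi_\pi$; that is, it is the average multiplicity of $\lambda_0$ over all signed graphs on $G$. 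Taking $\lambda_0=\rho(G)$ reduces the problem to counting, with multiplicity, the sign functions $\pi$ for which $G_\pi$ has $\rho(G)$ as an eigenvalue.

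First I would pin down $m_\pi(\rho(G))$. By Lemma~\ref{yinlipubanjing}, $G_\pi$ has $\rho(G)$ as an eigenvalue precisely when $G_\pi$ is switching equivalent to $G_+$. Switching equivalence preserves the spectrum, and since $G$ is connected the Perron--Frobenius theorem gives that $\rho(G)$ is a simple eigenvalue of $G_+$; hence $m_\pi(\rho(G))=1$ when $\pi$ lies in the set $\Pi_+$ of sign functions switching equivalent to $G_+$, and $m_\pi(\rho(G))=0$ otherwise. Therefore the multiplicity of $\rho(G)$ as a root of $\beta$ is exactly $2^{-|E|}|\Pi_+|$ (this is the observation recorded just before the statement).

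It remains to compute $|\Pi_+|$, and here I would reuse the counting argument already carried out in the proof of Theorem~\ref{jieshao2}: because $G$ is connected, $D^{-1}A(G)D=A(G)$ for a $\pm1$ diagonal matrix $D$ forces $D=\pm I$, so the assignment sending a diagonal $\pm1$ matrix $D$ to the sign function realized by $D^{-1}A(G)D$ is a two-to-one map from the $2^{|V|}$ such matrices onto $\Pi_+$; hence $|\Pi_+|=2^{|V|-1}$. Combining, the multiplicity of $\rho(G)$ as a root of $\beta$ equals $2^{-|E|}\cdot 2^{|V|-1}=2^{-|E|+|V|-1}$, as claimed.

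I do not anticipate a genuine obstacle: the statement is a corollary of Theorem~\ref{jieshao1}, Lemma~\ref{yinlipubanjing}, and the counting argument inside the proof of Theorem~\ref{jieshao2}. The only point requiring a moment's care is that the computation is insensitive to the bipartiteness of $G$ --- when $G$ is bipartite one has $\Pi_+=\Pi_-$ and $-\rho(G)$ is also an eigenvalue of $G_+$, but this affects neither $m_\pi(\rho(G))$ nor the count $|\Pi_+|=2^{|V|-1}$, so the same value $2^{-|E|+|V|-1}$ is obtained in both cases.
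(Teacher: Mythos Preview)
Your proof is correct and follows essentially the same route as the paper: both use Theorem~\ref{jieshao1} to identify the multiplicity in $\beta$ as the average multiplicity over all signed graphs, invoke Lemma~\ref{yinlipubanjing} (together with simplicity of $\rho(G)$ for connected $G$) to restrict to $\Pi_+$, and then reuse the count $|\Pi_+|=2^{|V|-1}$ from the proof of Theorem~\ref{jieshao2}. The paper presents this as a one-paragraph corollary of that proof, and your write-up simply makes the steps more explicit.
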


Consequently, we confirm our earlier observation that for a connected graph, $\beta$ is a polynomial only for a tree.

\section*{Acknowledgement}
This work is supported by the National Natural Science Foundation of China (No. 12071097), the Natural Science Foundation for The Excellent Youth Scholars of the Heilongjiang Province (No. YQ2022A002) and the Fundamental Research Funds for the Central Universities.

\section*{References}
\bibliographystyle{plain}
\bibliography{spbib}
\end{spacing}
\end{document}